\numberwithin{equation}{section}
\def\N{\mathbb{N}}
\def\R{\mathbb{R}}
\renewcommand\d{\partial}
\renewcommand\a{\alpha}
\renewcommand\b{\beta}
\newcommand\s{\sigma}
\newcommand{\newatop}{\genfrac{}{}{0pt}{1}}
\def\de{\delta}
\def\l{\lambda}
\def\epsilon{\varepsilon}
\def\e{\varepsilon}
\newcommand\br{\begin{rem}}
\newcommand\er{\end{rem}}
\newcommand\bp{\begin{pmatrix}}
\newcommand\ep{\end{pmatrix}}
\newcommand\be{\begin{equation}}
\newcommand\ee{\end{equation}}
\newcommand\ba{\begin{equation}\begin{aligned}}
\newcommand\ea{\end{aligned}\end{equation}}
\newtheorem{theorem}{Theorem}[section]
\newtheorem{lemma}[theorem]{Lemma}
\newtheorem{remark}[theorem]{Remark}
\newtheorem{ans}[theorem]{Definition}
\title{Dirichlet boundary conditions for degenerate and singular nonlinear parabolic equations} 
\begin{document}

\maketitle
\begin{center}
\textsc{\textmd{Fabio Punzo\footnote{Dipartimento di Matematica
``F. Enriques", Universit\`a degli Studi di Milano, via C. Saldini
50, 20133 Milano, Italy. Email: fabio.punzo@unimi.it.}, Marta
Strani \footnote{Dipartimento di Matematica e Applicazioni,
Universit\`a di Milano Bicocca, via Cozzi 55, 20125 Milano, Italy.
Email: marta.strani@unimib.it, martastrani@gmail.com .}}}
\end{center}

\vskip1cm

\begin{abstract}
We study existence and uniqueness of solutions to a class of
nonlinear degenerate parabolic equations, in bounded domains. We
show that there exists a unique solution which satisfies possibly
inhomogeneous Dirichlet boundary conditions. To this purpose some
barrier functions are properly introduced and used.
\end{abstract}

\pagestyle{myheadings}
\thispagestyle{plain}
\markboth{F. PUNZO, M.STRANI}{DIRICHLET BOUNDARY CONDITIONS FOR NONLINEAR PARABOLIC EQUATIONS}

{\bf Keywords.} Parabolic equations, Dirichlet boundary conditions, barrier functions, sub-- and supersolutions, comparison principle\,.  \\[3pt]
{\small\bf AMS subject classification}: 35K15, 35K20,
35K55, 35K65, 35K67\,.

\section{Introduction}
We are concerned with bounded solutions to the following nonlinear
parabolic equation:
\begin{equation}\label{intro0}
\rho \, \d_t u = \Delta [G(u)]\quad  {\rm in } \  \Omega \times
(0,T],
\end{equation}
where $\Omega $ is an open bounded subset of $\R^N\, (N\geq 1)$ with boundary
$\d\Omega=\mathcal S$ and $\rho$ is a positive function of the
space variables. We always make the following assumption: \vskip0.2cm {\bf H0.} $\mathcal S$ is an
$(N-1)-$dimensional compact submanifold of $\R^N$ of class $C^3$.

\vskip0.2cm \noindent Moreover, we require the functions $\rho$,
$G$ and $f$ to satisfy the following hypotheses

\vskip0.2cm
{\bf H1.}
 $\rho\in C(\Omega), \, \rho>0 \, $ in $\Omega$;
\vskip0.2cm
{\bf H2.}
$ G \in C^1(\R) , \, G(0)=0,\, G'(s)>0\,\hbox{\,for any}\,\,s\in\R\setminus\{0\}.$ Moreover, if $G'(0)=0$, then $G'$ is decreasing  in $ (-\delta,0)$ and increasing in
$(0,\delta) $ for some $\delta >0$.

\smallskip

Clearly, the character of equation \eqref{intro0} is determined by
$G$ and $\rho$; to see this, let us think equation \eqref{intro0}
as
\begin{equation}\label{e611}
\d_t u =\frac1{\rho} \Delta [G(u)]\quad  {\rm in } \  \Omega
\times (0,T]\,, \end{equation} and set
\[d(x):={\rm dist}(x,\mathcal S) \quad (x\in \bar \Omega)\,.\]
In fact, in view of the nonlinear function $G(u)$ and hypothesis
{\bf H2} the equation \eqref{intro0} can be {\it degenerate};
however, we also consider the case that such a kind of degeneracy
does not occur (see {\bf H5} below). Moreover, if the coefficient
$\rho(x)\to 0$ as $d(x)\to 0$, the operator $\frac 1{\rho}\Delta$
has the coefficient $\frac 1{\rho}$ which is unbounded at
$\mathcal S$, so the operator is {\it singular}; whereas, if
$\rho(x)\to \infty$ as $d(x)\to 0$, the operator
$\frac1{\rho}\Delta$ is degenerate at $\mathcal S$.

\vskip0.5cm Problem \eqref{intro0} appears in a wide number of
physical applications (see, e.g., \cite{KR1}); note that, by
choosing $G(u)=|u|^{m-1}u$ for some $m>1$, we obtain the well
known porous medium equation with a variable density
$\rho=\rho(x)$ (see \cite{Eid90, EK}).

\vskip0.2cm In the literature,  a particular attention has been
devoted to the following companion Cauchy problem
\begin{equation}\label{intro}
\left\{\begin{aligned}
\rho \d_t u &= \Delta [G(u)] \quad & {\rm in } \ & \R^N \times (0,T], \\
u&=u_0 \quad &{\rm in}  \ & \R^N \times \{ 0\}.
\end{aligned}\right.
\end{equation}
In particular, existence and uniqueness of   solutions to
\eqref{intro} have been extensively studied; note that here and hereafter we always consider {\it very weak} solutions (see Section \ref{mb} for the precise definition).
To be specific, if
one makes the following assumptions: \vskip0.2cm \noindent $(i)
\;\;   \rho\in C(\R^N), \, \rho>0 \, $,

\vskip0.1cm \noindent $ (ii)\;\; u_0\in L^{\infty}(\R^N)\cap
C(\R^N)$, \vskip0.2cm \noindent it is well known (see
\cite{EK,KR1,GHP, P1}) that there exists a bounded solution to
\eqref{intro}; moreover, for $N=1$ and $N=2$ such a solution is
unique. When $N \geq 3$, the uniqueness of the solution in the
class of bounded functions is no longer guaranteed, and it is
strictly related with the behavior at infinity of the density
$\rho$. Indeed, it is possible to prove that if $\rho$ does not
decay too fast at infinity, then problem \eqref{intro} admits at
most one bounded solution (see \cite{P1}). On the contrary, if one
suppose that  $\rho$ decays sufficiently fast at infinity, then
the non uniqueness appears (see \cite{Eid90, GMP, KPT,P1}).

\vskip0.2cm

In this direction, in  \cite{GMP} the authors prove the existence
and uniqueness of the solution to \eqref{intro} which satisfies
the following additional condition at infinity
\begin{equation}\label{e6}
\lim_{|x|\to \infty} u(x,t)=a(t)\quad \textrm{uniformly for}\,\,\,
t\in [0,T]\,,
\end{equation}
supposing $a \in C([0,T]), a>0$ and
$\lim_{|x|\to\infty}u_0(x)=a(0).$ Note that \eqref{e6} is a
point-wise condition at infinity for the solution $u$. Also, the
results of \cite{GMP} have been generalized in \cite{KP, KP2} to the
case of more general operators.

\vskip0.2cm

When considering equation \eqref{intro0} in a bounded subset
$\Omega \subset \R^N$, in view of {\bf H1}, since $\rho$ is
allowed either to vanish or to diverge at $\mathcal S$, it is
natural to consider the following initial value problem associated
with \eqref{intro0}:
\begin{equation}\label{intro2}
\left\{\begin{aligned}
\rho \d_t u &= \Delta [G(u)] \quad & {\rm in } \ & \Omega \times (0,T], \\
u&=u_0 \quad &{\rm in}  \ & \Omega \times \{ 0\},
\end{aligned}\right.
\end{equation}
where no boundary conditions are specified at $\mathcal S$. We
require $\rho$, $G$  and $f$ to satisfy hypotheses {\bf H1-2-3};
furthermore, for the initial datum $u_0$ we assume that
\vskip0.2cm {\bf H3.} $ u_0\in L^{\infty}(\Omega)\cap C(\Omega).$
\vskip0.2cm Concerning the existence and uniqueness of the
solutions to \eqref{intro2}, the case $G(u)=u$ has been largely
investigated, using both analytical and stochastic methods (see,
e.g., \cite{Khas, PoPT0, PoPT, Tai}). Also analogous elliptic or
elliptic-parabolic equations have attracted much attention in the
literature (see, e.g., \cite{FP1, F, FP2, Fich, Fich2, Freid, MP, 
OR}); in particular,  the question of prescribing continuous
data at $\mathcal S$ has been addressed (see, e.g., \cite{Khas,
OR, PoPT0, PoPT}).

For general nonlinear function $G$, the well-posedness of problem \eqref{intro2} has been studied in
\cite{KT} in the case $N=1$ and subsequently addressed for $N\geq
1$ in \cite{P7}.

Precisely, in \cite{P7} is proven that, if $\rho$ diverges
sufficiently fast as $d(x) \to 0$, then one has uniqueness of
bounded solutions not satisfying any additional condition at
$\mathcal S$.

Indeed, if one requires that there exists $\hat \e>0$ and
$\underline \rho \in C((0,\hat \e])$ such that \vskip0.1cm
\noindent $\bullet \;\; \rho(x)\geq \underline \rho(d(x)) >0$, for
any $x \in \mathcal S^{\hat \e}:= \{ x \in \Omega \, | \,d(x) <
\hat \e \}$, \vskip0.1cm \noindent $\bullet \;\; \int_0^{\hat \e}
\eta \, \underline\rho(\eta) \, d \eta=+\infty$, \vskip0.2cm
\noindent then there exists at most one bounded solution to
\eqref{intro2}.

\vskip0.2cm \noindent Conversely, if either $\rho(x) \to \infty$
sufficiently slow or $\rho$ does not diverge  when $d(x)\to 0$,
then nonuniqueness prevails in the class of bounded solutions.
Precisely, in \cite{P7} it is proven that, if there exists $\hat
\e>0$ and $\overline \rho \in C((0,\hat \e])$ such that
\vskip0.1cm \noindent $\bullet \;\; \rho(x)\leq \overline
\rho(d(x))$, for any $x \in \mathcal S^{\hat \e}$, \vskip0.1cm
\noindent $ \bullet\;\; \int_0^{\hat \e} \eta \, \overline\rho(\eta) \, d
\eta<+\infty$, \vskip0.2cm \noindent then, for any $A \in {\rm
Lip}([0,T])$, $A(0)=0$,  there exists a solution to \eqref{intro2}
satisfying
 \begin{equation}\label{lavoroPunzo}
 \lim_{d(x) \to 0} |U(x,t)-A(t)|=0,
 \end{equation}
uniformly with respect to $t \in [0,T]$, where  $U$ is defined as
\begin{equation*}
U(x,t) := \int _0^t G(u(x,\tau)) \, d\tau.
\end{equation*}
In particular, the previous result implies non-uniqueness of
bounded solutions to \eqref{intro2}. Moreover, the solution to
problem \eqref{intro2} which satisfies \eqref{lavoroPunzo} is
unique, provided $A\equiv 0$ or $G(u)=u.$

\vskip0.2cm Formally, the boundary $\mathcal S$ for problem
\eqref{intro2} plays the same role played by {\it infinity} for
the Cauchy problem \eqref{intro}; hence, the well-posedness for
\eqref{intro2} depends on the behavior of $\rho$ in the limit
$d(x)\to 0$, in analogy with the previous results for the Cauchy
problem \eqref{intro}, where it depends on the behavior of $\rho$
for large $|x|$.

Thus, a natural question that arises is if it is possible to
impose at $\mathcal S$ Dirichlet boundary conditions, instead of
the integral one \eqref{lavoroPunzo}. Moreover, on can ask if such
a Dirichlet condition restore uniqueness in more general
situations than the ones considered in connection with
\eqref{lavoroPunzo}. Observe that, as recalled above, the same
question has already been investigated for the linear case
$G(u)=u$ (see, e.g., \cite{Khas, OR, PoPT0, PoPT}), and for the
case that $\rho\equiv 1$ and $G$ is general (see \cite{DiB1,
DiB3}). The case where both $\rho$ and $G$ are general, which is a
quite natural situation also for various applications (see, e.g.,
\cite{KR}), has not been treated in the literature and is the
object of our investigation.

In fact, the main novelty of our paper relies in the following
result: we prove existence and uniqueness of a bounded solution to
problem \eqref{intro2} satisfying Dirichlet possibly
non-homogeneous boundary conditions. This is of course a much
stronger condition with respect to \eqref{lavoroPunzo}. As in
\cite{P7}, we require the function $\rho$ to satisfy \vskip0.2cm
{\bf H4.} there exists $\hat \e>0$ and $\overline \rho \in
C((0,\hat \e])$ such that \vskip0.1cm ${\bf i.} \;\; \rho(x)\leq
\overline \rho(d(x)) $, for any $x \in \mathcal S^{\hat \e}$,
\vskip0.1cm $ {\bf ii.}\;\; \int_0^{\hat \e} \eta \,
\overline\rho(\eta) \, d \eta<+\infty$. \vskip0.2cm \noindent A
natural choice for $\overline \rho$ is given by
\begin{equation}\label{choicerho}
\overline \rho(\eta)=\eta^{-\alpha}, \ \ {\rm for \ some} \ \alpha \in (-\infty, 2), \ {\rm and} \ \eta  \in (0,\hat \e].
\end{equation}

\noindent Under the hypothesis {\bf H4}, we show that, for any
$\varphi\in C(\mathcal S\times [0,T])$ , if either $G$ is non
degenerate, i.e. there holds \vskip0.2cm {\bf H5.} $G\in
C^1(\R),\; G'(s)\ge\a_0>0  \quad  \textrm{for any}\;\; s\in\R\,,$
\vskip0.2cm
 \noindent or $\varphi$ and $u_0$ satisfy
\begin{equation}\label{ipopositiva}
\varphi >0 \quad \textrm{in}\;\; \mathcal S\times[0, T]\,,\;\;
\liminf_{x\to x_0} u_0(x)\geq \a_1>0\quad \textrm{for every}\;\,
x_0\in \mathcal S\,,
\end{equation}
then there exists a unique bounded solution to \eqref{intro2} such
that, for each $\tau \in (0,T)$,
\begin{equation}\label{secres}
\lim_{\newatop{x\to x_0}{t\to t_0}} u(x,t)=\varphi(x_0,t_0) \quad
\textrm{uniformly with respect to} \,\,\, t_0 \in [\tau,T] \, \,
{\rm and } \,\, x_0 \in \mathcal S.
\end{equation}
If we drop either the assumption of non-degeneracy on $G$ or the
assumption \eqref{ipopositiva}, we need to restrict our analysis
to the special class of  data $\varphi$ which only depend on $x$;
in fact, for any $\varphi \in C(\mathcal S)$ we prove that there
exists a unique bounded solution to \eqref{intro2} satisfying, for
each $\tau \in (0,T)$,
\begin{equation}\label{secresa}
\lim_{x \to x_0} u (x,t) =\varphi(x_0) \quad \textrm{uniformly
with respect to} \,\,\, t \in [\tau,T] \, \, {\rm and } \,\, x_0
\in \mathcal S,
\end{equation}
provided
\begin{equation}\label{e301}
\lim_{x\to x_0} u_0(x)=\varphi(x_0)\quad \textrm{for every}\,\,
x_0\in \mathcal S\,.
\end{equation}

To prove the existence results we introduce and use suitable
barrier functions (see \eqref{e600}, \eqref{e601}, \eqref{e602},
\eqref{e603}, \eqref{ea3}, \eqref{e604}, \eqref{e606},
\eqref{e607} below). We should note that the definitions of such
barriers seem to be new. Let us observe that in constructing such barrier functions, always supposing that {\bf H4} holds, the cases
$\inf_{\Omega} \rho
>0$ and $\rho \in L^\infty(\Omega)$ will be treated separately
(for more details, see Section 3). To explain the differences among these two cases, let
us refer to the model case, in which hypothesis {\bf H4} holds
with $\overline \rho$ given by \eqref{choicerho}. So, the previous
two cases correspond to the choices $\alpha <0$ and $\alpha \in
[0,2)$, respectively. In view of \eqref{e611}, it appears natural
that the operator $\frac 1{\rho}\Delta$ has a prominent role. From
this viewpoint we can say that the previous two cases are deeply
different, since, when $\alpha\in (0,2)$, the operator $\frac
1{\rho}\Delta$ is degenerate at $\mathcal S$, whereas, when
$\a<0$, it is singular, in the sense that its coefficient $\frac
1{\rho}$ blows-up at $\mathcal S$. Clearly, the choice $\a=0$
recasts in both cases.

In constructing our barrier functions, besides taking into account
the behavior at $\mathcal S$ of the density $\rho(x)$ as described above, we have
to overcome some difficulties due to the nonlinear function
$G(u)$. In this respect, we should note that on the one hand,
barrier functions similar to those we construct were used in
\cite{GMP} and in \cite{KP}, where problem \eqref{intro} was addressed and conditions were prescribed at infinity.
However, such barriers cannot be trivially adapted to our case.
Indeed, by an easy variation of them we could only consider
$\mathcal S$ in place of {\it infinity}, prescribing $u(x,t)\to
a(t)$ as $d(x)\to 0\,\, (t\in (0,T])$, but we cannot distinguish
different points $x_0\in \mathcal S$ and impose conditions
\eqref{secres} and \eqref{secresa}\,. On the other hand, other
similar barriers were used in the literature (see, e.g.,
\cite{Fried}) to prescribe Dirichlet boundary conditions to
solutions to {\it linear} parabolic equations, in bounded domains; similar results have also been established for linear elliptic equations (see \cite{GT}, \cite{Mi});
however, they cannot be used in our situation, in view of the
presence of the nonlinear function $G(u)$.

\vskip0,2cm

Let us mention that our results have some connections with
regularity results up to the boundary. In fact, as a consequence
of our existence and uniqueness results, any solution to problem
\eqref{intro2} is continuous in $\overline\Omega\times[0,T].$
Similar regularity results could be deduced from results in
\cite{DiB1} and in \cite{DiB3}, where more general equations are
treated, only when
\begin{equation}\label{e100}
C_1\leq \rho(x)\leq C_2\quad \textrm{for all}\;\; x\in \Omega\,,
\end{equation}
for some $0<C_1<C_2$. However, we suppose hypotheses  {\bf H1} and
{\bf H5}, that are weaker than \eqref{e100}\,.

\medskip

We close this introduction with a brief overview of the paper. In
Section 2 we present a description of the main contributions of
the paper; in particular, we state Theorem \ref{pointcond},
Theorem \ref{degpuntuale} and Theorem \ref{deg2puntuale}, that
assure, under suitable hypotheses, the existence of a bounded
solution to \eqref{intro2} satisfying a proper Dirichlet boundary
condition. Subsequently, we show that such a solution is unique
(see Theorem \ref{teounicita}).

\noindent Section 3 is devoted to the proofs of the existence results, while in Section 4 the proof of the uniqueness result is given.

\section{Statement of the main results}

In this section we present existence and uniqueness results for
bounded solutions to
\begin{equation}\label{mainproblem}
\left\{\begin{aligned}
\rho \d_t u &= \Delta [G(u)] \quad & {\rm in } \ & \Omega \times (0,T], \\
u&=u_0 \quad &{\rm in}  \ & \Omega \times \{ 0\},
\end{aligned}\right.
\end{equation}
where $\Omega \subset \R^N$ satisfies hypothesis {\bf H0}, and  $\rho$, $G$ and $u_0$ satisfy hypotheses {\bf H1-4}.

In the following, we will extensively use the following notations:
\begin{itemize}
\item $Q_T := \Omega \times (0,T]$; \vskip0.2cm \item $\mathcal
S^\e:= \{ x \in \Omega \, : d(x) < \e \}\;\; (\e>0)$; \vskip0.2cm
\item $\mathcal A^\e := \d \mathcal S^\e \cap \Omega$; \vskip0.2cm
\item $\Omega^\e := \Omega \setminus \mathcal S^\e$\,.
\end{itemize}

\subsection{Mathematical background}\label{mb} Before stating our results, let us define the tools we shall use in the following.

\begin{ans}\label{soluzdebole}
A function $u \in C(\Omega \times [0,T ]) \cap L^{\infty}(\Omega
\times (0,T) )$ is a solution to \eqref{mainproblem} if
\begin{equation}\label{e8a}
\begin{aligned}
\int_0^{\tau}\int_{\Omega_1}\big[ u \, \rho\, \d_t \psi  + &G(u) \Delta \psi\big]\, dx \, dt = \\
&= \int_{\Omega_1} \,\big[u(x,T)\psi(x,T)  - u_0(x)
\psi(x,0)\big]\rho(x)\,dx \\ &+ \int_0^{\tau}\int_{\d \Omega_1}
G(u) \langle \nabla \psi, \nu \rangle  \, dS \, dt,
\end{aligned}
\end{equation}
for any open set $\Omega_1$ with smooth boundary $\d \Omega_1$
such that $\overline \Omega_1 \subset \Omega$, for any $\tau \in
(0,T]$ and for any $\psi \in C^{2,1}_{x,t}( \overline \Omega_1
\times [0,\tau])$, $\psi \geq 0$, $\psi =0$ in $\d \Omega_1 \times
[0,\tau]$, where $\nu$ denotes the  outer normal to $\Omega_1$.

Moreover, we say that $u$ is a supersolution (subsolution respectively) to \eqref{mainproblem} if \eqref{e8a} holds with $\leq$ ( $\geq respectively$).

\end{ans}

Given $\e>0$, we also consider the following auxiliary problem
\begin{equation}\label{mainproblemeps}
\left\{\begin{aligned}
\rho \d_t u &= \Delta [G(u)] \quad & {\rm in } \ & \Omega^\e \times (0,T]:=Q^\e_T, \\
u&=\phi \quad &{\rm in} \ & \mathcal A^\e \times (0,T), \\
u&=u_0 \quad &{\rm in}  \ & \Omega^\e \times \{ 0\};
\end{aligned}\right.
\end{equation}
where $\phi\in C(\mathcal A^\e\times [0,T])\,, \phi(x,0)=u_0(x)$
for all $x\in \mathcal A^\e$\,.

\begin{ans}\label{soluzdeboleeps}
A function $u \in C(\overline{\Omega^\e} \times [0,T ])$ is a
solution to \eqref{mainproblem} if
\begin{equation}\label{e8aeps}
\begin{split}
 \int_0^{\tau}\int_{\Omega_1}\big[ u \, \rho\, \d_t \psi  + G(u) \Delta \psi\big]\, dx \, dt
= \int_{\Omega_1} \,\big[u(x,T)\psi(x,T)  - u_0(x)
\psi(x,0)\big]\rho(x)\,dx \\ + \int_0^{\tau}\int_{\d
\Omega_1\setminus \mathcal A^\e} G(u) \langle \nabla \psi, \nu
\rangle  \, dS \, dt + \int_0^{\tau}\int_{\d \Omega_1\cap \mathcal
A^\e} G(\phi) \langle \nabla \psi, \nu \rangle  \, dS \, dt,
\end{split}
\end{equation}
for any open set $\Omega_1\subset \Omega^\e$ with smooth boundary
$\d \Omega_1$, for any $\tau \in (0,T]$ and for any $\psi \in
C^{2,1}_{x,t}( \overline \Omega_1 \times [0,\tau])$, $\psi \geq
0$, $\psi =0$ in $\d \Omega_1 \times [0,\tau]$, where $\nu$
denotes the  outer normal to $\Omega_1$. Supersolution  and
subsolution  are defined accordingly.

\end{ans}

\subsection{Existence results}
At first, we consider the case of nondegenerate nonlinarities $G$
satisfying hypothesis {\bf H5}.

.

\begin{theorem}\label{pointcond}
Let hypotheses {\bf H0-H1}, {\bf H3-H5} be satisfied. Let $\varphi
\in C(\mathcal S \times [0,T])$. Then there exists a maximal
solution to \eqref{mainproblem} such that, for each $\tau \in
(0,T)$,
\begin{equation}\label{primoris}
\lim_{\newatop{x\to x_0}{t\to t_0}} u(x,t) = \varphi(x_0,t_0),
\end{equation}
uniformly with respect to $t_0 \in [\tau,T]$ and $x_0 \in \mathcal
S$.

\end{theorem}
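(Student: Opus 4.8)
The plan is to obtain $u$ as a monotone limit of solutions to the regularized problems \eqref{mainproblemeps}, and then to force the boundary condition \eqref{primoris} by sandwiching $u$ between explicit barriers near $\mathcal S$. First I would work on the inner domains $\Omega^\e=\Omega\setminus\mathcal S^\e$. On each $\overline{\Omega^\e}$ the coefficient $\rho$ is, by \textbf{H1} and compactness, bounded above and below by positive constants, and by \textbf{H5} we have $G'\ge\a_0>0$; hence \eqref{mainproblemeps} is a uniformly parabolic quasilinear problem and classical theory provides a unique bounded solution $u_\e$ once the lateral datum $\phi$ on $\mathcal A^\e$ and the initial datum $u_0$ are prescribed (after, if needed, a standard smoothing of $u_0$ and $G$ that is removed at the end). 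The nondegeneracy \textbf{H5} is precisely what makes this approximating step routine, and it is the reason the genuinely degenerate cases are deferred to the later theorems.

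To single out the \emph{maximal} solution I would choose the lateral data as large as admissible, concretely $\phi\equiv M$ on $\mathcal A^\e$ with $M:=\max\{\|u_0\|_\infty,\|\varphi\|_\infty\}$, and compare across different $\e$. Since the constant $M$ is a supersolution, the maximum principle gives $u_\e\le M$ on $\Omega^\e$, and for $\e_1<\e_2$ comparison on $\Omega^{\e_2}$ (where $u_{\e_1}\le M=u_{\e_2}$ on $\mathcal A^{\e_2}$) yields $u_{\e_1}\le u_{\e_2}$. Thus $\{u_\e\}$ decreases as $\e\downarrow0$ and is uniformly bounded, so it converges pointwise on $Q_T$ to a limit $u$. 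Interior De Giorgi--Nash--Moser and Schauder estimates for the uniformly parabolic equation \eqref{e611} give local equicontinuity of $\{u_\e\}$ on compact subsets of $\Omega\times(0,T]$, so the convergence is locally uniform there, $u\in C(\Omega\times(0,T])$, and one may pass to the limit in \eqref{e8aeps} to conclude that $u$ solves \eqref{mainproblem} in the sense of Definition \ref{soluzdebole}. Maximality follows from the choice of the largest admissible lateral data together with comparison: any bounded solution $\le M$ is dominated by every $u_\e$, hence by $u$.

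The crux is the boundary behaviour \eqref{primoris}, and this is where the barriers enter. Fix $\tau\in(0,T)$, a point $x_0\in\mathcal S$ and a time $t_0\in[\tau,T]$. On the collar $\mathcal S^{\hat\e}$ I would build a pair $\overline W,\underline W$ of super-- and subsolutions of $\rho\,\d_t W=\Delta[G(W)]$ of the form $W=g\big(G(\varphi(x_0,t))\pm K\,\Theta(d(x))\pm R(t)\big)$, where $g=G^{-1}$ is Lipschitz by \textbf{H5} and $\Theta$ is a radial profile in the distance $d(x)$ obtained by integrating the ODE associated to the radial part of $\frac1\rho\Delta$. Here assumption \textbf{H4} --- in the model case \eqref{choicerho}, the integrability $\int_0^{\hat\e}\eta\,\overline\rho(\eta)\,d\eta<+\infty$ --- is exactly what guarantees the existence of a bounded $\Theta$ with $\Theta(0)=0$ and $\Theta(\hat\e)$ as large as needed, so that the barriers do attain the prescribed value at $\mathcal S$; the $C^3$ regularity of $\mathcal S$ from \textbf{H0} is used to control $\Delta d$ while verifying the differential inequality $\rho\,\d_t W\ge\Delta[G(W)]$ (resp.\ $\le$) in the collar. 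The relaxation term $R(t)$, large at $t=\tau/2$ and negligible for $t\ge\tau$, makes $\overline W\ge u\ge\underline W$ on the part of the parabolic boundary of $\mathcal S^{\hat\e}\times[\tau/2,T]$ lying away from $\mathcal S$ (on $\{d=\hat\e\}$ and $\{t=\tau/2\}$) irrespective of any mismatch between $u_0$ and $\varphi(\cdot,0)$; this is what produces uniformity in $t_0\in[\tau,T]$ rather than on all of $[0,T]$. The delicate point is the comparison up to $\mathcal S$: the \textbf{H4} integrability is used again to show that the flux of $G(u)$ across the level sets $\{d=\e\}$ is controlled as $\e\to0$, so that comparison applies on the collar without prescribing data at $\mathcal S$, giving $\underline W\le u\le\overline W$. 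Letting $(x,t)\to(x_0,t_0)$ then squeezes $u$ to $\varphi(x_0,t_0)$, and the continuity of $\varphi$ (entering through a modulus-of-continuity correction absorbed into $K\,\Theta$) together with the uniformity of the construction in $x_0$ delivers the uniform limit \eqref{primoris}.

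The main obstacle I anticipate is the construction of $\Theta$ and of the whole barrier. Unlike the conditions-at-infinity barriers of the cited works, these must be \emph{localized} at a single boundary point $x_0$ (so as to distinguish different points of $\mathcal S$ and to track the time-dependent value $\varphi(x_0,\cdot)$), must absorb the nonlinearity through the substitution $g=G^{-1}$ while keeping the correct sign in $\rho\,\d_t W\gtrless\Delta[G(W)]$, and must be compatible simultaneously with the two regimes hidden in \textbf{H4}: $\inf_\Omega\rho>0$, where $\frac1\rho\Delta$ is singular, and $\rho\in L^\infty(\Omega)$, where it is degenerate. Verifying the differential inequality for $W$ in the collar and justifying the comparison up to the degenerate/singular boundary is the technical heart of the argument; everything else reduces to comparison and compactness.
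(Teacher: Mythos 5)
Your barrier machinery is in the right spirit (two regimes for $\rho$, profiles built from {\bf H4}, comparison near $\mathcal S$), but the construction of $u$ and the comparison step that is supposed to force \eqref{primoris} both fail. If you solve \eqref{mainproblemeps} with lateral datum $\phi\equiv M=\max\{\|u_0\|_\infty,\|\varphi\|_\infty\}$, the monotone limit $u$ is the maximal solution of the \emph{unconstrained} problem \eqref{mainproblem}; and under {\bf H4} the lateral data of the approximations do persist in the limit (this is exactly what the barrier technique proves, applied with the constant datum $M$ in place of $\varphi$), so this $u$ attains the value $M$ on $\mathcal S$ for $t\geq\tau$, not $\varphi(x_0,t_0)$. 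Your construction therefore produces the right object only when $\varphi\equiv M$. The sandwich $\underline W\le u\le\overline W$ cannot be recovered afterwards: on $\mathcal A^\e$ your approximations equal $M$, while $\overline W$ there is close to $G^{-1}\big[G(\varphi(x_0,t))+K\Theta(\e)\big]\to\varphi(x_0,t)<M$, so the comparison fails precisely on the piece of the parabolic boundary that matters. The proposed escape --- ``comparison applies on the collar without prescribing data at $\mathcal S$'', justified by {\bf H4} flux control --- is not a gap-filler but a false statement: if bounded solutions on the collar could be compared with the barriers using only their values on $\{d=\hat\e\}$ and at the initial time, then \emph{every} bounded solution of \eqref{mainproblem} would be squeezed to $\varphi$ at $\mathcal S$, for \emph{every} $\varphi$, which is absurd. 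Indeed {\bf H4} is exactly the regime in which \cite{P7} proves \emph{non-uniqueness}: there exist distinct bounded solutions with identical data away from $\mathcal S$ and different behaviors at $\mathcal S$. Data-free comparison near the boundary belongs to the complementary regime $\int_0^{\hat\e}\eta\,\underline\rho(\eta)\,d\eta=+\infty$.

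The repair is to tie the approximations to $\varphi$ itself, which is what the paper does: solve \eqref{e16bis} with lateral datum $\tilde\varphi+\eta$ on $\mathcal A^\e$ (where $\tilde\varphi$ is a continuous extension of $\varphi$ as in \eqref{e610} and $\eta>0$ is a lift), and run the barrier comparison entirely inside $\Omega^\e$, on the small cylinders $[B_\delta(x_0)\cap\Omega^\e]\times(\underline t_\de,\overline t_\de)$: there the value of $u^\eta_\e$ on the inner lateral piece $B_\delta(x_0)\cap\mathcal A^\e$ is known and matches the barrier up to the modulus of continuity $\sigma$ (see \eqref{stima1}, \eqref{stima3}), while the penalization terms $\underline\beta|x-x_0|^2$ and $\underline\lambda(t-t_0)^2$ handle the remaining pieces $\d B_\delta(x_0)\cap\Omega^\e$ and $\{t=\underline t_\de\}$. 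One then lets $\e\to0$, $\sigma\to0$, $\eta\to0$. Note also that freezing $(x_0,t_0)$ and penalizing with $(t-t_0)^2$ avoids your ansatz $G(\varphi(x_0,t))$ inside the barrier, which would require differentiating $\varphi$ in time although $\varphi$ is merely continuous. Finally, maximality does not come from ``largest admissible data'' but from the lift $\eta$: any solution $v$ satisfying \eqref{primoris} verifies $v\le\tilde\varphi+\alpha\le u^\eta$ on $\mathcal A^\e\times(\tau,T]$ and $v(\cdot,0)<u^\eta(\cdot,0)$, whence $v\le u^\eta$ by Lemma \ref{lemma-1}, and $v\le u$ after $\eta\to0$.
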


We can also prove similar results to Theorem \ref{pointcond} in
the case of a general nonlinearity $G$ satisfying {\bf H2}\,.

\begin{theorem}\label{degpuntuale}
Let hypotheses {\bf H0-4} be satisfied and let $\varphi \in
C(\mathcal S)$. Suppose that condition \eqref{e301} holds. Then
there exists a maximal solution to \eqref{mainproblem} such that
\begin{equation}\label{secris}
\lim_{x\to x_0} u(x,t) = \varphi(x_0),
\end{equation}
uniformly with respect to $t \in [0,T]$ and $x_0\in \mathcal S$.
\end{theorem}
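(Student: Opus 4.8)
The plan is to recover the possibly degenerate problem as a limit of non-degenerate ones, to which Theorem \ref{pointcond} already applies. For $n\in\N$ set $G_n(s):=G(s)+\tfrac1n s$; since {\bf H2} gives $G'(s)\ge0$ for all $s$, each $G_n$ belongs to $C^1(\R)$, satisfies $G_n(0)=0$ and $G_n'(s)\ge\tfrac1n>0$, i.e. obeys {\bf H5} with $\a_0=1/n$. Regarding $\varphi\in C(\mathcal S)$ as the $t$-independent element of $C(\mathcal S\times[0,T])$, Theorem \ref{pointcond} furnishes for each $n$ a maximal solution $u_n$ of \eqref{mainproblem} with $G$ replaced by $G_n$, satisfying $\lim_{(x,t)\to(x_0,t_0)}u_n(x,t)=\varphi(x_0)$ uniformly for $t_0\in[\tau,T]$, $x_0\in\mathcal S$, for each $\tau\in(0,T)$. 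I would then show that a subsequence of $\{u_n\}$ converges to a solution of the original problem that attains $\varphi$ on $\mathcal S$ in the sense of \eqref{secris}.

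The first ingredient is a family of bounds independent of $n$. Comparing $u_n$ with the constant super/subsolutions $\pm M$, where $M:=\max\{\|u_0\|_{L^\infty(\Omega)},\,\max_{\mathcal S}|\varphi|\}$, the comparison principle gives $\|u_n\|_{L^\infty(Q_T)}\le M$. The decisive estimate is a uniform control of the trace at $\mathcal S$: using the barrier functions of Section 3 — whose construction rests on {\bf H4} and, crucially, localizes near each individual point $x_0\in\mathcal S$ rather than at the whole of $\mathcal S$ — I would produce, for every $x_0\in\mathcal S$ and every $\eta>0$, sub- and supersolutions of the $G_n$-problems with $n$-independent boundary behaviour at $x_0$, pinching $u_n$ so that $|u_n(x,t)-\varphi(x_0)|\le\eta+o_{x\to x_0}(1)$ uniformly in $n$ and in $t\in[0,T]$. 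Here the hypothesis that $\varphi$ depends only on $x$ is exactly what allows these barriers to be taken essentially stationary; this is indispensable in the degenerate regime, where a genuinely parabolic barrier would carry a $\d_t$-term that cannot be dominated uniformly as $n\to\infty$. The control down to $t=0$ relies on \eqref{e301}, which makes $u_0$ compatible with $\varphi$ at $\mathcal S$ and lets the same barriers bound $u_n$ for small $t$.

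With the uniform sup bound in hand, I would invoke local regularity for degenerate parabolic equations of porous-medium type (cf. \cite{DiB1,DiB3}) to obtain local H\"older estimates for $u_n$ and $G_n(u_n)$ on compact subsets of $Q_T$ depending only on $M$, hence uniform in $n$. Arzel\`a--Ascoli and a diagonal argument would then extract a subsequence along which $u_n\to u$ and $G_n(u_n)\to G(u)$ locally uniformly in $Q_T$, and passing to the limit in \eqref{e8a} would show that $u$ solves \eqref{mainproblem}; the uniform barrier bounds survive the limit and yield \eqref{secris}. For maximality I would argue by comparison on the subdomains $\Omega^\e$: given any solution $v$ of \eqref{mainproblem} satisfying \eqref{secris}, on the parabolic boundary of $Q^\e_T$ one has $v\le u$ up to an error vanishing as $\e\to0$ (both traces tend to $\varphi$ on $\mathcal A^\e$, while $v=u_0=u$ at $t=0$), so the comparison principle for the Dirichlet problem on the fixed domain $\Omega^\e$ would give $v\le u$ in $Q^\e_T$ and, letting $\e\to0$, in all of $Q_T$.

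The main obstacle is the construction of the localized barrier pair in the genuinely degenerate case, that is, without the positivity assumption \eqref{ipopositiva}: when $G'(0)=0$ the operator $\tfrac1\rho\Delta[G(\cdot)]$ degenerates wherever the solution vanishes, so the comparison functions must be engineered so that this degeneracy does not spoil the strict super/subsolution inequalities, and with all constants independent of $n$. Reconciling this with the singular or degenerate behaviour of $\tfrac1\rho$ at $\mathcal S$ dictated by {\bf H4} — which, as noted after \eqref{choicerho}, forces the regimes $\inf_\Omega\rho>0$ and $\rho\in L^\infty(\Omega)$ to be treated by different barriers — is the technical heart of the argument, and is why the several distinct barriers \eqref{e600}--\eqref{e607} are required.
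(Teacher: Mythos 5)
Your central observation is the right one, and it is exactly the paper's: because $\varphi$ is time-independent, the barriers can be taken stationary, and then the super/subsolution inequality reduces to a sign condition on $\Delta G(\underline w)$ (e.g. $\Delta G(\underline w)\geq \underline M\rho-2\underline\beta N\geq 0$ for the choice \eqref{ea3}), which never divides by $G'$ and hence never needs {\bf H5}. But this observation makes your whole regularization scheme $G_n:=G+\tfrac1n\,\mathrm{id}$ superfluous: if, as you claim, you can build sub/supersolutions of the $G_n$-problems with $n$-independent constants, the identical barriers work verbatim for $G$ itself, and that is what the paper does --- it constructs \eqref{ea3}, \eqref{e604} (case $\inf_\Omega\rho>0$, via $V$ of Lemma \ref{lemma0}) and \eqref{e606}, \eqref{e607} (case $\rho\in L^\infty$, via Miller's function \eqref{defhmiller}) directly for the degenerate equation, with no limit in $n$ and no need for the DiBenedetto-type compactness you invoke. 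There is also a real soft spot in how you propose to use the barriers: you want to ``pinch $u_n$'' directly, i.e. compare a barrier with a \emph{very weak} solution on a neighborhood $N_\delta(x_0)$ that touches $\mathcal S$. A comparison principle for very weak solutions on domains reaching $\mathcal S$ is precisely what is \emph{not} available here (failure of comparison/uniqueness near $\mathcal S$ is the subject of the paper); the paper sidesteps this by comparing the barriers with the classical approximations $u^\eta_\e$ of \eqref{e16bis} on $N^\e_\delta(x_0)\subset\Omega^\e$, where the boundary data are explicit, and only then letting $\e\to 0$ to transfer the bounds to $u^\eta$.

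The genuine gap is your maximality argument. You propose: on the parabolic boundary of $Q^\e_T$ one has $v\le u$ ``up to an error vanishing as $\e\to0$'', hence $v\le u$ by comparison on $\Omega^\e$. This fails twice over. First, a comparison principle requires the ordering to hold exactly on the parabolic boundary, and the $o(1)$ error cannot be absorbed by adding a constant, because for nonlinear $G$ the function $u+c$ is \emph{not} a supersolution ($\Delta G(u+c)\neq\Delta G(u)$). Second, and more fundamentally, $v$ and your $u$ approach the \emph{same} boundary values $\varphi$ on $\mathcal S$, so there is no strict separation anywhere to start from. The paper's device for maximality is built into its construction and is absent from yours: the approximating problems \eqref{e16bis}, \eqref{e18bis} carry data lifted by $\eta$, so that any competitor $v$ satisfying \eqref{secris} obeys $v\le\varphi+\a\le u^\eta$ on $\mathcal A^\e\times(\tau,T]$ for small $\e$ (as in \eqref{e30c}) and $v(\cdot,0)=u_0<u_0+\eta=u^\eta(\cdot,0)$ (as in \eqref{e31c}); Lemma \ref{lemma-1} (the Aronson--Crandall--Peletier duality comparison, which needs ordering only on $\mathcal A^\e$ for small $\e$ and at $t=0$) then yields $v\le u^\eta$ in all of $Q_T$, and letting $\eta\to0^+$ gives $v\le u$. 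Without the $\eta$-lift, or without redoing the duality machinery of Lemma \ref{lemma-1} / Lemma \ref{lem3} to handle vanishing boundary discrepancies (which is essentially the uniqueness proof of Theorem \ref{teounicita}), your limit solution cannot be shown maximal by the argument you give.
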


Finally, we can also consider data $\varphi$ and $u_0$ satisfying
\begin{equation}\label{ipophipositiva}
\varphi>0 \quad \textrm{in}\;\,\mathcal S\times [0,T] \quad
\textrm{and}\;\; \liminf_{x\to x_0}u_0(x)\geq \alpha_1>0\quad
\textrm{for every}\;\, x_0\in \mathcal S\,.
\end{equation}

\begin{theorem}\label{deg2puntuale}
Let hypothesis {\bf H0-4} be satisfied and let $\varphi\in
C(\mathcal S\times [0,T])$. Suppose that \eqref{ipophipositiva}
holds. Then there exists a maximal solution to \eqref{mainproblem}
such that \eqref{primoris} holds.

\end{theorem}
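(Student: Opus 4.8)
The plan is to reduce the degenerate problem to the nondegenerate situation of Theorem \ref{pointcond} by exploiting the positivity \eqref{ipophipositiva}, which confines the solution near $\mathcal S$ to the region where $G'$ is bounded away from $0$. The maximal solution $u$ itself is produced exactly as in the general degenerate theory behind Theorem \ref{degpuntuale}: for each small $\e>0$ one solves the regular auxiliary problem \eqref{mainproblemeps} on $Q^\e_T$ with boundary datum $\phi_\e\in C(\mathcal A^\e\times[0,T])$ obtained by transporting $\varphi$ to $\mathcal A^\e$ along the nearest-point projection onto $\mathcal S$ (so $\phi_\e\to\varphi$ as $\e\to0$), and then passes to the limit $\e\to 0$ to obtain the maximal solution $u$. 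The genuinely new content is that this $u$ attains $\varphi$ at $\mathcal S$ in the strong sense \eqref{primoris}, uniformly in $x_0\in\mathcal S$ and $t_0\in[\tau,T]$.

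The first, and I expect hardest, step is to gain a uniform positive lower bound near the boundary: there exist $\e_0>0$ and $\alpha>0$ with $u\ge\alpha$ in $\mathcal S^{\e_0}\times(0,T]$. Assumption \eqref{ipophipositiva} gives $\varphi\ge m:=\min_{\mathcal S\times[0,T]}\varphi>0$, and, by the $\liminf$ hypothesis together with compactness of $\mathcal S$, a radius $\e_0$ and a constant $\alpha\in(0,m)$ with $u_0\ge\alpha$ on $\mathcal S^{\e_0}$. On the parabolic boundary of the annulus $\{\e<d(x)<\e_0\}$ the approximant $u_\e$ already exceeds $\alpha$ at the initial time and on $\mathcal A^\e$ (since $\phi_\e\to\varphi\ge m>\alpha$), but there is no a priori control on the inner face $\mathcal A^{\e_0}$. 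To close this gap I would compare $u_\e$ not with the constant $\alpha$ but with a stationary subsolution $\underline u:=G^{-1}(V)$, where $V$ is subharmonic on $\Omega^\e$, equals $G(\alpha)$ on $\mathcal S^{\e_0/2}$, and decreases to $G(-\|u_0\|_{L^\infty})$ in the interior (such $V$ exists: take $G(\alpha)$ minus a nonnegative superharmonic bump supported away from $\mathcal S$). Since $G$ is a continuous increasing bijection, $\underline u$ is well defined, with $\underline u=\alpha$ on $\mathcal S^{\e_0/2}$ and $\underline u\le -\|u_0\|_{L^\infty}\le u_0$ in the interior; moreover $\Delta[G(\underline u)]=\Delta V\ge 0=\rho\,\partial_t\underline u$ and $\underline u\le\phi_\e$ on $\mathcal A^\e$, so the comparison principle gives $u_\e\ge\underline u=\alpha$ on $\mathcal S^{\e_0/2}$. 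Letting $\e\to0$ yields the lower bound for $u$. Consequently $\alpha\le u\le M:=\|u\|_{L^\infty}$ on $\mathcal S^{\e_0/2}$, whence $G'(u)\ge\alpha_0:=\min_{[\alpha,M]}G'>0$ with $G''$ bounded there, i.e.\ $G$ behaves like a nondegenerate nonlinearity in this layer.

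With this bound in hand the boundary behavior follows from the barrier argument of Theorem \ref{pointcond}, now carried out inside $\mathcal S^{\e_0/2}$. For fixed $(x_0,t_0)$ with $x_0\in\mathcal S$, $t_0\in[\tau,T]$, and small $\eta>0$, one builds a supersolution $\overline w$ and a subsolution $\underline w$ of $\rho\,\partial_t w=\Delta[G(w)]$ in a parabolic neighborhood of $(x_0,t_0)$, depending on $d(x)$, $|x-x_0|$ and $|t-t_0|$ as in the barriers \eqref{e600}--\eqref{e607}, bracketing $\varphi$ along $\mathcal S$, with $\overline w\ge M$ and $\underline w\le\alpha$ on the inner face $\mathcal A^{\e'}$ ($\e'<\e_0/2$) of the neighborhood---where the just-established bounds $\alpha\le u\le M$ apply---while $\overline w(x_0,t_0)=\underline w(x_0,t_0)=\varphi(x_0,t_0)+O(\eta)$. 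Crucially, the lower bound confines both barriers to $[\alpha,M]$ (the supersolution lies above $\varphi\ge m>\alpha$ automatically, and the subsolution now need only descend to $\alpha$, not to $-M$), so the supersolution/subsolution inequalities involve only $G'\ge\alpha_0>0$ and bounded $G''$: the computations are identical to the nondegenerate case and never feel the degeneracy of $G$ at $0$, while the uniform continuity of $\varphi$ on the compact set $\mathcal S\times[0,T]$ makes the construction uniform in $(x_0,t_0)$. Comparing with $u_\e$ on the parabolic boundary of the neighborhood---using $\phi_\e\to\varphi$ on $\mathcal A^\e$, the bounds on $\mathcal A^{\e'}$, and the restriction $t_0\ge\tau>0$ to absorb the incompatibility of $u_0$ with $\varphi$ at $t=0$---gives $\underline w\le u\le\overline w$ near $(x_0,t_0)$, and letting $\eta\to0$ yields \eqref{primoris} uniformly. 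The single real obstacle is the positivity step: without it the lower barrier would have to drop from $\varphi(x_0,t_0)>0$ to $-M$ across the inner face, hence pass through $u=0$ where the subsolution inequality for a degenerate $G$ breaks down, and \eqref{ipophipositiva} is precisely what excludes this.
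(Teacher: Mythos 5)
Your overall strategy---use \eqref{ipophipositiva} to obtain a uniform positive lower bound for the solution in a collar around $\mathcal S$, so that the equation is effectively nondegenerate there, and then run the barrier argument of Theorem \ref{pointcond} inside that collar---is exactly the strategy of the paper. However, the step you yourself single out as the crucial one (the positive lower bound) is carried out with an object that cannot exist. You take $\underline u=G^{-1}(V)$ with $V$ subharmonic on $\Omega^\e$, $V\equiv G(\alpha)$ on the open collar $\mathcal S^{\e_0/2}$, and $V$ dropping to $G(-\|u_0\|_{L^\infty})$ in the interior, justified by writing $V=G(\alpha)-W$ with $W\ge 0$ superharmonic and ``supported away from $\mathcal S$''. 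By the strong minimum principle, a nonnegative superharmonic function on a connected domain that vanishes on a nonempty open set is identically zero; equivalently, your $V$ would be a subharmonic function attaining its maximum value $G(\alpha)$ on an open set of interior points of $\Omega^\e$, hence constant on the relevant connected component, contradicting $V<G(\alpha)$ in the interior. So the stationary barrier, as you describe it, does not exist. (A stationary barrier is not beyond repair: one can take a profile strictly monotone in $d(x)$, e.g. $V=G(-\|u_0\|_{L^\infty})+c\,\max\{e^{-\lambda d(x)}-e^{-\lambda \e_0/2},0\}$ with $\lambda$ large, which is subharmonic near $\mathcal S$ thanks to {\bf H0}; but then all the boundary and initial comparisons must be redone for a non-constant profile, with $c$ chosen small enough for the comparisons and large enough to keep a positive bound on a thinner collar---none of which is in your write-up.)

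The paper sidesteps this difficulty entirely: the lower bound comes not from a stationary subsolution but from a genuine time-dependent solution. It picks $\underline u_0\in C(\bar\Omega)$ with $\underline u_0\le u_0$ and $\underline u_0\to \alpha_2/2$ at $\mathcal S$ (where $\alpha_2>0$ is built from $\min\varphi$ and $\alpha_1$), and applies Theorem \ref{degpuntuale} (constant boundary datum $\alpha_2/2$) to get a solution $\underline u$ with $\underline u\to\alpha_2/2$ uniformly at $\mathcal S$; comparison with the approximants $u^\eta_\e$ then yields $u^\eta\ge \alpha_2/4$ in a collar $\mathcal S^{\e_1}\times(0,T]$. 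After that, rather than re-deriving the barrier inequalities on a restricted range of values as you propose, the paper replaces $G$ by a globally nondegenerate $G_1$ satisfying {\bf H2} with $G_1=G$ on $[\alpha_2/4,\infty)$ and $G_1'\ge \alpha_0/2>0$, observes that $u^\eta$ solves the $G_1$-equation in the collar, and concludes by Theorem \ref{pointcond}; this is essentially equivalent to your range-restriction argument, and that part of your proposal is sound. The genuine gap is confined to, but is real in, the positivity step.
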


\begin{remark}\label{osst0}
{\rm If we further suppose that
\begin{equation}\label{e302}
\lim_{x\to x_0} u_0(x)=\varphi(x_0,0)\quad \textrm{for every}\,\,
x_0\in \mathcal S\,,
\end{equation}
then in Theorems \ref{pointcond} and \ref{deg2puntuale} we can
take $\tau=0$.}
\end{remark}

\subsection{Uniqueness results}

\begin{theorem}\label{teounicita}
Let hypotheses {\bf H0-4} be satisfied, and  let $\varphi\in
C(\mathcal S\times [0,T])$. Suppose that \eqref{ipophipositiva}
holds. Then there exists at most one bounded solution to
\eqref{mainproblem} such that \eqref{primoris} holds.
\end{theorem}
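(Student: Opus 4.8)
The plan is to prove a contraction/comparison property by the duality (Oleinik) method: if $u_1,u_2$ are two bounded solutions of \eqref{mainproblem} both satisfying \eqref{primoris} with the same $\varphi$ and the same initial datum $u_0$, I show $u_1\equiv u_2$. Set $w:=u_1-u_2$ and
$$a(x,t):=\begin{cases}\dfrac{G(u_1)-G(u_2)}{u_1-u_2}, & \text{if } u_1\neq u_2,\\[2pt] G'(u_1), & \text{if } u_1=u_2,\end{cases}$$
which is continuous and bounded, and nonnegative by the monotonicity in {\bf H2}; thus $w$ is a very weak solution of the linear equation $\rho\,\d_t w=\Delta(a\,w)$ with $w(\cdot,0)=0$. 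The role of \eqref{ipophipositiva} is decisive: since $\varphi>0$ on $\mathcal S\times[0,T]$ and $\liminf_{x\to x_0}u_0\ge\a_1>0$, both $u_i$ are bounded below by a positive constant on some strip $\mathcal S^{\e_1}$, so on the compact range of the solutions {\bf H2} forces $a\ge c_0>0$ there. In other words, the dual operator $\frac a\rho\Delta$ is \emph{nondegenerate precisely on the region where $\rho$ may blow up or vanish}, while in the interior, where $a$ may degenerate (if $G'(0)=0$ and some $u_i$ vanishes), $\rho$ is continuous and strictly positive and causes no trouble.

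Fix $\tau\in(0,T)$ and a density $g\in C_c^\infty(\Omega\times(0,\tau))$; the goal is $\int_0^\tau\!\int_\Omega w\,\rho\,g\,dx\,dt=0$, which forces $w\equiv0$ on $[0,\tau]$. For small parameters $\e,\delta>0$ I solve on $\Omega^\e\times(0,\tau)$ the \emph{backward} regularized dual problem
$$\rho\,\d_t\psi+a_\delta\,\Delta\psi=\rho\,g \quad\text{in } \Omega^\e\times(0,\tau),\qquad \psi(\cdot,\tau)=0,\qquad \psi=0 \ \text{on } \mathcal A^\e\times(0,\tau),$$
where $a_\delta$ is a smooth approximation of $a$ with $\delta\le a_\delta\le\Lambda:=\|a\|_\infty$. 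Since $\rho$ is bounded between two positive constants on the compact set $\overline{\Omega^\e}$, this is a uniformly parabolic problem, and classical theory yields a smooth solution $\psi=\psi_{\e,\delta}$ together with a maximum-principle bound $\|\psi\|_\infty\le C(\tau)\|g\|_\infty$ independent of $\e,\delta$ and the standard Oleinik energy estimate $\int_0^\tau\!\int_{\Omega^\e}a_\delta(\Delta\psi)^2\,dx\,dt\le C$ (uniform in $\delta$).

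Inserting $\psi$ as a test function into the difference of the weak identities \eqref{e8a} written for $u_1$ and $u_2$ on $\Omega_1=\Omega^\e$ (legitimate for test functions of either sign, since for solutions \eqref{e8a} is an equality, linear in $\psi$, and hence extends by linearity), and using $\psi(\cdot,\tau)=0$ and the cancellation of the common initial datum, I obtain
$$\int_0^\tau\!\!\int_{\Omega^\e}\! w\,\rho\,g\,dx\,dt=\int_0^\tau\!\!\int_{\Omega^\e}\!(a_\delta-a)\,w\,\Delta\psi\,dx\,dt+\int_0^\tau\!\!\int_{\mathcal A^\e}\! a\,w\,\langle\nabla\psi,\nu\rangle\,dS\,dt.$$
Letting $\delta\to0$ first, the interior error vanishes: by Cauchy--Schwarz it is bounded by $\|w\|_\infty(\int(a_\delta-a)^2/a_\delta)^{1/2}(\int a_\delta(\Delta\psi)^2)^{1/2}$, and the first factor tends to $0$ because $a_\delta\to a$ on the region where $a>0$, a region bounded away from $\mathcal S$ where the energy bound applies.

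The genuinely delicate step --- \emph{the main obstacle} --- is to show that the boundary integral over $\mathcal A^\e$ tends to $0$ as $\e\to0$, i.e. as $\Omega^\e\uparrow\Omega$. I would control it by combining three facts: the uniform decay $\sup_{\mathcal A^\e\times[\tau,T]}|w|\to0$ furnished by \eqref{primoris}; the lower bound $a\ge c_0>0$ near $\mathcal S$ coming from \eqref{ipophipositiva}; and a quantitative estimate of the normal flux $a\,\langle\nabla\psi,\nu\rangle$ on $\mathcal A^\e$ obtained by comparing $\psi$ with barriers of the type constructed in Section 3, whose very construction rests on the integrability condition in {\bf H4}, namely $\int_0^{\hat\e}\eta\,\overline\rho(\eta)\,d\eta<+\infty$. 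This integrability is exactly what keeps the weighted boundary layer of finite mass and makes $a\,\nabla\psi$ integrable up to $\mathcal S$, so that the whole boundary term is dominated by $\sup_{\mathcal A^\e}|w|$ times a constant independent of $\e$, and hence vanishes. Once both error terms are eliminated we get $\int_0^\tau\!\int_\Omega w\,\rho\,g=0$ for every admissible $g$, whence $w\equiv0$ on $\Omega\times[0,\tau]$; letting $\tau\uparrow T$ gives $u_1\equiv u_2$ on $Q_T$. I expect the boundary-flux estimate to be the crux, since it is there that the degenerate/singular behavior of $\rho$ at $\mathcal S$ and the nonlinearity of $G$ interact.
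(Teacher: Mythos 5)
Your overall strategy (duality between the two solutions themselves, via a regularized backward parabolic dual problem) is a legitimate classical scheme, but as written it has a genuine gap at exactly the point you yourself flag as the crux: the uniform boundary-flux estimate is never proved, and the ingredients you invoke do not yield it. What you need is an $L^{1}(\mathcal A^\e\times(0,\tau))$ bound on $a_\delta\,\langle\nabla\psi_{\e,\delta},\nu\rangle$, uniform in both $\e$ and $\delta$. The estimates you do have ($\|\psi\|_\infty\le C$ and $\int_0^\tau\!\int_{\Omega^\e}a_\delta(\Delta\psi)^2\,dx\,dt\le C$) are interior energy bounds and do not control the trace of $\nabla\psi$ on $\mathcal A^\e$; the barriers of Section 3 are sub/supersolutions of the nonlinear \emph{forward} equation and do not transfer to a linear \emph{backward} problem whose coefficient $a_\delta$ depends on the unknown solutions; and since $w=u_1-u_2$ has no sign, the boundary term cannot be discarded by a sign argument. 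There is a second, related gap: \eqref{primoris} gives $\sup_{\mathcal A^\e\times[\tau',T]}|w|\to 0$ only for $\tau'>0$, while on $\mathcal A^\e\times(0,\tau')$ you merely know $|w|\le C$; so even granting a flux bound you must additionally show that the flux mass over short time intervals is $O(\tau')$ uniformly in $\e,\delta$, and then let $\tau'\to 0$. (Also, your claim that $a\ge c_0>0$ on a strip near $\mathcal S$ for \emph{all} $t\in(0,T]$ does not follow from \eqref{primoris} alone, which controls only $t\ge\tau'$; near $t=0$ it requires a comparison argument exploiting $\liminf u_0\ge\a_1$.)

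The paper sidesteps precisely this obstruction by two moves you do not make. First, it never compares two arbitrary solutions: it uses the existence results (this is where \eqref{ipophipositiva} actually enters, via Theorem \ref{deg2puntuale}) to produce a \emph{maximal} solution $\bar u$, so that any other solution $u$ satisfies $\bar u\ge u$, hence $G(\bar u)-G(u)\ge 0$, and the final-time term $-\int_{\Omega^\e}[\bar u(x,T)-u(x,T)]\rho\,\psi\,dx$ can be dropped by sign. Second, its dual function is elliptic and time-independent: $\Delta\psi^\e=-F$ in $\Omega^\e$, $\psi^\e=0$ on $\mathcal A^\e$ (Lemma \ref{lem3}); it involves neither $\rho$, nor $G$, nor the solutions, and the flux bound \eqref{e63} is immediate, since by the strong maximum principle $\langle\nabla\psi^\e,\nu^\e\rangle<0$ on $\mathcal A^\e$ and the divergence theorem gives $\int_{\mathcal A^\e}\big|\langle\nabla\psi^\e,\nu^\e\rangle\big|\,dS=\int_{\Omega^\e}F\,dx$, a constant independent of $\e$. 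The near-$t=0$ difficulty is then handled automatically by the factor $\tau$ produced when a time-independent flux is integrated over $(0,\tau)$. To salvage your route you would have to prove the uniform parabolic flux estimate from scratch; note that even the paper's own parabolic-dual machinery (Lemma \ref{lemma-1}, following \cite{ACP}) avoids this by assuming the sign condition \eqref{e605} on the lateral boundary rather than relying on smallness. The cleaner fix is to adopt the paper's two ideas, maximal solution plus elliptic dual, after which the rest of your argument goes through.
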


\begin{remark}\label{osst1}
{\rm If we consider either the case of a non-degenerate
nonlinearity $G$ satisfying {\bf H5}, or if we require that
$\varphi(x,t)=\varphi(x)$ for all $t\in (0,T]$, the previous
uniqueness result still holds. It can be shown by using the same
arguments as in Theorem \ref{teounicita}. }
\end{remark}

\section{Existence results: proofs}

\subsection{Preliminaries}

In the proofs of our existence results, in order to show that the solution we construct is {\it maximal}, we will make use of the following
lemma.
\begin{lemma}\label{lemma-1}
Let hypotheses {\bf H0-4} be satisfied. Let  $u$ be a subsolution
to problem \eqref{mainproblem} and let $ \hat u$ be a
supersolution to problem \eqref{mainproblem}. Suppose that for
each $\tau\in (0,T)$ there exists $\e_\tau>0$ such that, for all
$0<\e<\e_\tau$,
\begin{equation}\label{e605}
u \leq \hat u \quad {\rm in } \quad \mathcal A^\e \times (\tau,T].
\end{equation}
Then
\begin{equation*}
u \leq \hat u \quad {\rm in } \quad Q_T.
\end{equation*}
\end{lemma}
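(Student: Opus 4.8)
\emph{Strategy and reduction.} The plan is to prove a weak comparison principle on the subdomains $\Omega^\e$, where by {\bf H1} the density is nondegenerate, and then to exhaust $Q_T$ by sending $\e\to0$. By {\bf H0} the distance $d$ is $C^3$ near $\mathcal S$, so for $\e$ small $\mathcal A^\e=\{d=\e\}$ is a $C^3$ hypersurface and $\overline{\Omega^\e}\subset\Omega$ is an admissible domain $\Omega_1$ in Definition \ref{soluzdebole}; moreover $0<\rho_{\min}\le\rho\le\rho_{\max}<\infty$ on the compact set $\overline{\Omega^\e}$. Set $w:=u-\hat u$. It suffices to show $w(\bar x,\bar t)\le0$ for an arbitrary $(\bar x,\bar t)\in Q_T$; so I fix such a point, choose $\tau\in(0,\bar t)$, and then $\e<\min\{\e_\tau,d(\bar x)\}$, so that \eqref{e605} holds on $\mathcal A^\e\times(\tau,T]$ and $(\bar x,\bar t)\in\Omega^\e\times(\tau,T]$.

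\emph{Duality.} Writing \eqref{e8a} on $\Omega_1=\Omega^\e$ with final time $T$ for the subsolution $u$ (with $\ge$) and for the supersolution $\hat u$ (with $\le$) and subtracting, the common initial term carrying $u_0$ cancels; setting $a(x,t):=\int_0^1 G'\big(\theta u+(1-\theta)\hat u\big)\,d\theta\ge0$, so that $G(u)-G(\hat u)=a\,w$ by {\bf H2}, one obtains
\[
\int_0^T\!\!\int_{\Omega^\e} w\,\big(\rho\,\partial_t\psi+a\,\Delta\psi\big)\,dx\,dt\ \ge\ \int_{\Omega^\e}\rho\,w(\cdot,T)\,\psi(\cdot,T)\,dx+\int_0^T\!\!\int_{\mathcal A^\e}\big(G(u)-G(\hat u)\big)\,\langle\nabla\psi,\nu\rangle\,dS\,dt
\]
for every admissible $\psi\ge0$ vanishing on $\mathcal A^\e$. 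Following the Holmgren--Oleinik duality method, I would choose $\psi=\psi_\delta$ solving the backward adjoint problem $\rho\,\partial_t\psi+a_\delta\,\Delta\psi=-\rho\,g$ in $\Omega^\e\times(0,T)$, with $\psi(\cdot,T)=0$ and $\psi=0$ on $\mathcal A^\e$, where $g\ge0$ is smooth with $\supp g\subset\Omega^\e\times(\tau,T)$ and $a_\delta$ is a smooth regularization of $a$ with $a_\delta\ge\delta>0$. Since the coefficients are bounded and uniformly elliptic on $\overline{\Omega^\e}$, standard parabolic theory (after a mollification of $\rho$, if needed) gives a smooth $\psi_\delta$, and the maximum principle gives $\psi_\delta\ge0$; hence $\psi_\delta$ is admissible and $\langle\nabla\psi_\delta,\nu\rangle\le0$ on $\mathcal A^\e$.

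\emph{Conclusion and main obstacle.} Substituting and using $\psi_\delta(\cdot,T)=0$ yields
\[
\int_0^T\!\!\int_{\Omega^\e}\rho\,g\,w\,dx\,dt\ \le\ -\int_0^T\!\!\int_{\mathcal A^\e}\big(G(u)-G(\hat u)\big)\,\langle\nabla\psi_\delta,\nu\rangle\,dS\,dt\ +\ \int_0^T\!\!\int_{\Omega^\e} w\,(a-a_\delta)\,\Delta\psi_\delta\,dx\,dt.
\]
On $\mathcal A^\e\times(\tau,T]$ the boundary integrand is $\ge0$, since $G$ is increasing (so $G(u)-G(\hat u)\le0$ there by \eqref{e605}) while $\langle\nabla\psi_\delta,\nu\rangle\le0$, so that part only helps; the remaining boundary contribution over the initial strip $(0,\tau)$ is controlled by the continuity of $u,\hat u$ together with their common initial datum $u_0$ (whence $w\to0$ as $t\to0$ on $\mathcal A^\e$), and is sent to $0$ by letting $\e\to0$ with $\tau=\tau(\e)\to0$ jointly, using {\bf H4} to bound $\nabla\psi_\delta$ near $\mathcal S$. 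As $g\ge0$ is arbitrary and $\rho>0$, this forces $w\le0$, in particular $w(\bar x,\bar t)\le0$. The principal obstacle is the degeneracy of $G$: when $G'(0)=0$ the coefficient $a$ vanishes and the adjoint problem degenerates, so the linearization error $\int\!\!\int w\,(a-a_\delta)\,\Delta\psi_\delta$ must be shown to vanish as $\delta\to0$; this is the delicate point and rests on the Oleinik-type uniform estimate $\int_0^T\!\!\int_{\Omega^\e}a_\delta\,|\Delta\psi_\delta|^2\,dx\,dt\le C$, independent of $\delta$. A secondary difficulty is the joint passage to the limit in $\delta$, $\tau$ and $\e$ near the corner $\mathcal S\times\{0\}$, where the behavior of $\rho$ is exactly the one quantified by {\bf H4}.
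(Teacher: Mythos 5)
Your proposal follows essentially the same route as the paper: linearize via a coefficient $a$ with $G(u)-G(\hat u)=a\,(u-\hat u)$, regularize $a$ to a strictly positive smooth coefficient, solve a backward uniformly parabolic adjoint problem on $\Omega^\e$ (where, by {\bf H1} and compactness, $\rho$ is bounded away from $0$ and $\infty$), use $\psi\ge 0$, $\psi=0$ on $\mathcal A^\e$ to get $\langle\nabla\psi,\nu\rangle\le 0$, absorb the lateral boundary term on $(\tau,T]$ through hypothesis \eqref{e605}, treat the strip $(0,\tau)$ separately, and then pass to the limit in the regularization parameter, in $\tau$, and in $\e$. This is exactly the Aronson--Crandall--Peletier duality argument that the paper implements through Lemma \ref{lemma-1bis} (quoted from \cite[Lemma 10]{ACP}); your Holmgren variant (zero terminal datum and source $-\rho g$) versus the paper's variant (terminal datum $\chi$ and homogeneous equation) is an immaterial difference.

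Two steps in your sketch, however, do not work as stated. First, ``a smooth regularization of $a$ with $a_\delta\ge\delta$'' is not enough: after Cauchy--Schwarz the linearization error is bounded by $\|u-\hat u\|_{\infty}\,\big\|(a-a_\delta)/\sqrt{a_\delta}\big\|_{L^2}\,\big\|\sqrt{a_\delta}\,\Delta\psi_\delta\big\|_{L^2}$, and the Oleinik-type estimate you invoke controls only the last factor; you also need to construct $a_\delta$ so that $(a-a_\delta)/\sqrt{a_\delta}\to 0$ in $L^2$ (e.g.\ mollification at a scale coupled to $\delta$). This compatibility condition is precisely what the paper's Lemma \ref{lemma-1bis} postulates for the sequence $\{a_n\}$; for a generic regularization the error term need not vanish, so this is a missing ingredient, not merely a deferred computation. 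Second, your treatment of the strip $(0,\tau)$ rests on two claims that fail: {\bf H4} cannot ``bound $\nabla\psi_\delta$ near $\mathcal S$'' --- it is a hypothesis on $\rho$ alone, and indeed the paper's own proof of this lemma never uses it, since everything happens on $\Omega^\e$; and ``$w\to 0$ as $t\to 0$ on $\mathcal A^\e$'' is not available from Definition \ref{soluzdebole}, where a very weak sub/supersolution sees $u_0$ only inside the integral inequality, so attainment (or ordering) of the initial trace would itself require a proof. The paper's device avoids both difficulties: by the uniform energy estimate $\sup_{0\le t\le T}\int_{\Omega^\e}|\nabla\psi_n|^2\le C$ (Lemma \ref{lemma-1bis}, {\bf iii}) and the mere boundedness of $u,\hat u$, the strip contribution is $O(\tau)$ uniformly in $n$, so one lets $n\to\infty$ first and $\tau\to 0$ afterwards; no joint coupling of $\tau$ and $\e$ and no initial-trace argument are needed. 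With these two repairs your argument becomes the paper's proof.
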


\noindent In order to prove Lemma \ref{lemma-1}, we need to state
the following result; for its proof, see \cite[Lemma 10]{ACP}.

\begin{lemma}\label{lemma-1bis}
Let $\e>0.$ Let
\begin{equation}\label{defa}
a:=\left\{\begin{aligned}
&[G(u)-G(\hat u)]/(u-\hat u) \quad &{\rm for} \ \ u \neq \hat u, \\
&0 \quad & {\rm elsewhere},
\end{aligned}\right.
\end{equation}
with $u$ and $\hat u$ as in Lemma \ref{lemma-1}. Then there exists
a sequence $\{a_n\}\in C^\infty(\overline{Q^\e_T})$ such that
\begin{equation*}
\frac{1}{n^{N+1}} \leq a_n \leq \| a\|_{L^\infty(Q^\e_T)} +
\frac{1}{n^{N+1}} \qquad {\rm and} \qquad
\frac{(a_n-a)}{\sqrt{a_n}} \to 0 \ \ {\rm in} \ \ L^2(Q^\e_T).
\end{equation*}
Furthermore, let $\chi \in C^\infty_0(\Omega^\e)$ with $0 \leq
\chi \leq 1$. Then there exists a unique solution $\psi_n \in
C^{2,1}_{x,t}(\overline {Q^\e_T})$ to problem
\begin{equation}\label{BPP}
\left \{ \begin{aligned}
\rho \d_t \psi_n +  a_n\Delta \psi_n&=0 \quad & {\rm in } \ &Q^\e_T, \\
\varphi_n(x,T)&=\chi(x) \quad &{\rm in}  \ & \Omega^\e.
\end{aligned}\right.
\end{equation}
Moreover, $\psi_n$ has the following properties: \vskip0.2cm
\noindent  {\bf i.} $0 \leq \psi_n \leq 1$ on $\overline
Q^\e_T\,$; \vskip0.2cm \noindent {\bf ii.} $\int \int_{Q^\e_T} a_n
|\Delta \psi_n|^2 <C\, $, for some $C >0$ independent of $n$.
\vskip0.2cm \noindent  {\bf iii.} $\sup_{0 \leq t \leq T}
\int_{\Omega^\e} |\nabla \psi_n|^2 < C\, $, for some $C >0$
independent of $n$. \vskip0.2cm \noindent
\end{lemma}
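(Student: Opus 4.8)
The plan is to prove Lemma~\ref{lemma-1bis} by the classical duality (regularization) method, the only genuinely new feature being the presence of the density $\rho$. First observe that, since $G$ is nondecreasing by {\bf H2} and $u,\hat u\in L^\infty(Q^\e_T)$, the coefficient $a$ in \eqref{defa} satisfies $0\le a\le\|a\|_{L^\infty(Q^\e_T)}<\infty$, the upper bound being the Lipschitz constant of $G$ on the (bounded) range of $u$ and $\hat u$. To build $\{a_n\}$ I would extend $a$ by $0$ to $\R^{N+1}$, keeping values in $[0,\|a\|_\infty]$, mollify with a standard nonnegative kernel to obtain $b_{\delta}\in C^\infty$ with $0\le b_\delta\le\|a\|_\infty$ and $b_\delta\to a$ in $L^2(Q^\e_T)$, and then set $a_n:=b_{\delta_n}+n^{-(N+1)}$, choosing $\delta_n\to0$ \emph{fast enough} that $\|b_{\delta_n}-a\|_{L^2(Q^\e_T)}^2\le n^{-(N+2)}$. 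This immediately gives the two-sided bound $n^{-(N+1)}\le a_n\le\|a\|_\infty+n^{-(N+1)}$, while
\[
\int\!\!\int_{Q^\e_T}\frac{|a_n-a|^2}{a_n}\le 2n^{N+1}\|b_{\delta_n}-a\|_{L^2}^2+2n^{-(N+1)}|Q^\e_T|\le\frac2n+\frac{2|Q^\e_T|}{n^{N+1}}\longrightarrow0,
\]
which is precisely $(a_n-a)/\sqrt{a_n}\to0$ in $L^2(Q^\e_T)$.

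Next I would read \eqref{BPP} (completed by the natural homogeneous condition $\psi_n=0$ on $\mathcal A^\e\times(0,T)$) as a \emph{backward} problem and reverse time by $s=T-t$, so that $\psi_n$ solves $\partial_s\psi_n=(a_n/\rho)\Delta\psi_n$ with initial datum $\chi$ at $s=0$. Because $a_n\ge n^{-(N+1)}>0$ is smooth and $\rho$ lies between two positive constants on the compact set $\overline{\Omega^\e}\subset\Omega$, this is a linear \emph{uniformly} parabolic equation; standard linear theory (as in \cite[Lemma 10]{ACP}) yields a unique $\psi_n\in C^{2,1}_{x,t}(\overline{Q^\e_T})$, the compatibility being automatic since $\chi\in C^\infty_0(\Omega^\e)$ vanishes near $\mathcal A^\e$. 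Property {\bf i} is then the comparison principle: the constants $0$ and $1$ are, respectively, a sub- and a supersolution matching the data $0\le\chi\le1$, and the operator has no zeroth order term.

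The estimates {\bf ii} and {\bf iii} I would obtain from a single weighted energy identity, which is where $\rho$ must be handled with care. Testing the reversed equation against $\Delta\psi_n$ and integrating by parts in $x$ (the boundary term vanishes because $\psi_n=0$, hence $\partial_s\psi_n=0$, on $\mathcal A^\e$) gives
\[
\frac{d}{ds}\int_{\Omega^\e}|\nabla\psi_n|^2\,dx=-2\int_{\Omega^\e}\frac{a_n}{\rho}|\Delta\psi_n|^2\,dx\le0.
\]
Monotonicity in $s$ yields $\int_{\Omega^\e}|\nabla\psi_n|^2\le\int_{\Omega^\e}|\nabla\chi|^2$ for all times, which is {\bf iii}; integrating the identity over $s\in[0,T]$ and using $\rho\le M:=\max_{\overline{\Omega^\e}}\rho$ gives $\int\!\!\int_{Q^\e_T}a_n|\Delta\psi_n|^2\le\tfrac{M}{2}\int_{\Omega^\e}|\nabla\chi|^2=:C$, which is {\bf ii}; both constants are independent of $n$. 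The main obstacle is exactly this uniformity: since $a_n$ may degenerate as $n\to\infty$, a naive energy estimate would produce $n$-dependent constants, and it is the specific combination above---together with the rate-matched choice of $\delta_n$ making $(a_n-a)/\sqrt{a_n}\to0$---that keeps everything bounded. A secondary technical point is the classical $C^{2,1}$ solvability when $\rho$ is merely continuous; this is absorbed by the cited construction of \cite[Lemma 10]{ACP}, if necessary after replacing $\rho$ by a smooth approximation with the same upper bound $M$, which leaves all the estimates above unchanged.
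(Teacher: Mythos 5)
Your proposal is correct and is essentially the paper's own approach: the paper offers no proof of Lemma \ref{lemma-1bis}, deferring entirely to \cite[Lemma 10]{ACP}, and your argument---the rate-matched mollification giving $n^{-(N+1)}\leq a_n\leq\|a\|_{L^\infty}+n^{-(N+1)}$ and $(a_n-a)/\sqrt{a_n}\to 0$ in $L^2$, the time reversal to a uniformly parabolic problem with the (implicit) homogeneous lateral Dirichlet condition, comparison with the constants $0$ and $1$ for {\bf i}, and the single energy identity obtained by testing against $\Delta\psi_n$, which yields {\bf ii} and {\bf iii} with $n$-independent constants via the two-sided positive bounds of $\rho$ on the compact set $\overline{\Omega^\e}$---is exactly that classical duality argument, correctly adapted to the density $\rho$. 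The one caveat you flag, classical $C^{2,1}_{x,t}$ solvability when $\rho$ is merely continuous, is a gap the paper itself silently inherits from the citation (in \cite{ACP} there is no $\rho$, so all coefficients are smooth), and your smoothing remark treats it at least as carefully as the original does.
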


\begin{proof}[\bf Proof of Lemma \ref{lemma-1}]
The proof of this lemma is an adaptation of the arguments used in
\cite[Proposition 9]{ACP}. Let $a$ be as in \eqref{defa}; since
$u$ and $\hat u$ are respectively subsolution and supersolution to
\eqref{mainproblem}, in view of the Definition \ref{soluzdebole},
with $\Omega_1$ and $\psi$ as in Definition \ref{soluzdeboleeps},
by \eqref{e8aeps} with $\tau=T$, we get
\begin{equation}\label{soprasotto}
\begin{split}
\int_{\Omega^\e} \rho(x) [u(x,T)-\hat u(x,T)] \psi(x, T) \, dx -\int_0^T \int_{\Omega^\e} (u-\hat u) \left \{\d_t \psi+a \, \Delta \psi \right\} dt \, dx \leq \\
 - \int_0^{\tau} \int_{\mathcal A^\e}[G(u)-G(\hat u)]  \langle \nabla \psi, \nu \rangle dS dt- \int_{\tau}^{T} \int_{\mathcal A^\e}[G(u)-G(\hat u)]  \langle \nabla \psi, \nu \rangle dS dt.
\end{split}
\end{equation}
Now, let $\{a_n\}$ and $\psi_n$ as in Lemma \ref{lemma-1bis}.
Since, for every $n \in \N$, there holds $\langle \nabla \psi_n,
\nu \rangle \leq 0$ on $\mathcal A^\e$, if we set $\psi=\psi_n$ in
\eqref{soprasotto}, using \eqref{e605}, we obtain
\begin{equation}\label{e102}
\begin{split}
\int_{\Omega^\e} \rho [u(x,T)-\hat u(x,T)] \chi(x) \, dx -\int_0^T \int_{\Omega^\e} (u-\hat u)(a-a_n) \, \Delta \psi_n dt \, dx \leq \\
 - \int_0^{\tau} \int_{\mathcal A^\e}[G(u)-G(\hat u)]  \langle \nabla \psi_n, \nu \rangle  dS \, dt-
 \int_{\tau}^{T} \int_{\mathcal A^\e}[G(u)-G(\hat u)]  \langle \nabla \psi_n, \nu \rangle  dS \, dt\leq & \\
  \leq  - \int_0^{\tau} \int_{\mathcal A^\e}[G(u)-G(\hat u)]  \langle \nabla
\psi_n, \nu \rangle \, dS \, dt.
 \end{split}
\end{equation}
In view of Lemma \ref{lemma-1bis}, we get
\begin{equation}\label{e101}
\begin{split}
\left|\int_0^T \int_{\Omega^\e} (u-\hat u)(a-a_n) \, \Delta \psi_n
dt \, dx\right|\leq
C_1\left\|\frac{a-a_n}{\sqrt{a_n}}\right\|_{L^2(Q_T)}\left\|\sqrt{a_n}\Delta\psi_n
\right\|_{L^2(Q_T)}\\ \leq C_1\sqrt C
\left\|\frac{a-a_n}{\sqrt{a_n}}\right\|_{L^2(Q_T)}\to 0 \quad
\textrm{as}\,\, n\to \infty\,,
\end{split}
\end{equation}
where the constant $C_1>0$ depends only on $\|  u\|_{L^\infty}$
and $\| \hat u\|_{L^\infty}$. Furthermore,
\begin{equation}\label{e103}
\begin{split}
\Big | \int_0^{\tau} \int_{\mathcal A^\e}[G(u)-G(\hat u)]   \langle \nabla \psi_n, \nu \rangle dS \, dt \Big | \leq \\
 \leq \left(  \int_0^{\tau} \int_{\Omega^\e}[G(u)-G(\hat
u)]^2 dx \, dt \right)^{\frac{1}{2}}
\left( \int_0^{\tau} \int_{\Omega^\e}\left|\nabla \psi_n\right|^2dx \, dt \right)^{\frac{1}{2}} \leq \\
 \leq C \left( \int_0^{\tau} \int_{\Omega^\e} |\nabla \psi_n|^2 dx \, dt \right)^{\frac{1}{2}}\leq C_1\, \tau\, \sqrt{ C},
\end{split}
\end{equation}
where we used Lemma \ref{lemma-1bis}, ${{(\bf iii)}}$. Hence, in
view of \eqref{e101} and \eqref{e103}, letting $n\to \infty$ in
\eqref{e102} and then $\tau\to 0$,  we end up with
\begin{equation}\label{finest}
\int_{\Omega^\e} \rho(x) [u(x,T)-\hat u(x,T)]  \chi(x) \, dx  \leq
0\,.
\end{equation}
Since \eqref{finest} holds for every $\chi \in
C^{\infty}_0(\Omega^\e)$, by approximation it also holds with
$\chi(x)= {\rm sign}(u(x, T)-\hat u(x, T))^+,\, x\in \Omega^\e$.
This implies $u\leq \hat u$ in $Q^\e_T$, from which the thesis
immediately follows, letting $\e\to 0^+.$
\end{proof}

\subsection{Proofs of the Theorems}

In view of the assumption on $\rho(x)$ given in {\bf H4}, there
holds the following lemma (see \cite{P7}).

\begin{lemma}\label{lemma0}
Let hypotheses {\bf H0- H4} be satisfied. Then there exists a function $V(x) \in C^2(\overline{\mathcal S^\e})$ such that
\begin{equation*}
\left\{\begin{aligned}
\Delta V(x) & \leq -\rho(x), \quad &{\rm for \;\, all}\;\;  x \in \mathcal S^\e, \\
V(x) & > 0, \quad &{\rm for \;\, all}\;\; x \in \mathcal S^\e, \\
 V(x)&\to 0 &{\rm as} \ d(x)\to 0.
\end{aligned}\right.
\end{equation*}
\end{lemma}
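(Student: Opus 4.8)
The plan is to look for a barrier of the form $V(x)=\Phi(d(x))$, depending on $x$ only through the distance $d(x)={\rm dist}(x,\mathcal S)$, and to choose the profile $\Phi\colon(0,\e]\to(0,\infty)$ by solving an auxiliary ordinary differential inequality. The point of this ansatz is that, by {\bf H0}, the distance function is of class $C^2$ (indeed $C^3$) in a one-sided tubular neighborhood of $\mathcal S$, it satisfies the eikonal identity $|\nabla d|\equiv1$ there, and its Laplacian is bounded: fixing $\e>0$ small enough (and $\e\le\hat\e$, so that {\bf H4} applies) we may set $K:=\sup_{\mathcal S^\e}|\Delta d|<\infty$. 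A direct computation then gives, for $x\in\mathcal S^\e$,
\[
\Delta V(x)=\Phi''(d(x))\,|\nabla d(x)|^2+\Phi'(d(x))\,\Delta d(x)=\Phi''(d(x))+\Phi'(d(x))\,\Delta d(x).
\]
Thus, if $\Phi$ is chosen increasing (so $\Phi'\ge0$) and such that $\Phi''+K\Phi'\le-\overline{\rho}$, then since $\Delta d\le K$ we get $\Delta V\le\Phi''(d)+K\Phi'(d)\le-\overline{\rho}(d(x))\le-\rho(x)$, using {\bf H4 i.} Hence it suffices to produce such a profile.

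Next I would construct $\Phi$ explicitly as the solution of $\Phi''+K\Phi'=-\overline{\rho}$ on $(0,\e]$ with $\Phi'(\e)=0$; integrating the resulting first-order equation for $g:=\Phi'$ leads to
\[
\Phi(s)=\int_0^s e^{-K\sigma}\Big(\int_\sigma^\e e^{K\eta}\,\overline{\rho}(\eta)\,d\eta\Big)\,d\sigma,\qquad s\in(0,\e].
\]
Since $\rho>0$ forces $\overline{\rho}>0$ on $(0,\e)$, one checks $g=\Phi'>0$ on $(0,\e)$, so $\Phi$ is strictly increasing; together with $\Phi(0^+)=0$ (proved below) this yields $\Phi>0$ on $(0,\e]$. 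Continuity of $\overline{\rho}$ gives $g\in C^1((0,\e])$, whence $\Phi\in C^2((0,\e])$ and, by composition with $d$, $V\in C^2(\mathcal S^\e)$ with the claimed limit $V\to0$ as $d(x)\to0$.

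The one step that is not routine, and which is exactly where hypothesis {\bf H4 ii.} enters, is the verification that $\Phi(0^+)=0$, i.e.\ that the double integral is finite and vanishes as $s\to0$. The clean way is to bound $e^{-K\sigma}\le1$, $e^{K\eta}\le e^{K\e}$, and exchange the order of integration by Tonelli:
\[
\int_0^s\!\!\int_\sigma^\e\overline{\rho}(\eta)\,d\eta\,d\sigma=\int_0^\e\overline{\rho}(\eta)\,\min(s,\eta)\,d\eta=\int_0^s\eta\,\overline{\rho}(\eta)\,d\eta+s\int_s^\e\overline{\rho}(\eta)\,d\eta.
\]
The first term tends to $0$ because $\int_0^{\hat\e}\eta\,\overline{\rho}(\eta)\,d\eta<\infty$; for the second I split $\int_s^\e=\int_s^\delta+\int_\delta^\e$ and estimate $s\int_s^\delta\overline{\rho}\le\int_0^\delta\eta\,\overline{\rho}(\eta)\,d\eta$, an arbitrarily small tail of the convergent integral, while $s\int_\delta^\e\overline{\rho}\to0$ for fixed $\delta$. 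Hence $\Phi(0^+)=0$. This is the main obstacle: without the weighted integrability in {\bf H4 ii.} the profile would fail to vanish (or even to be finite) at $\mathcal S$ and the ansatz would collapse, and the weight $\eta$ in {\bf H4 ii.} is precisely the one produced by the double integration reconstructing $\Phi$ from $\overline{\rho}$. Collecting the three verified properties completes the proof.
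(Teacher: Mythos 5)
Your proposal is correct, and it is worth noting that the paper contains no proof of Lemma \ref{lemma0} at all: the statement is simply quoted from \cite{P7}, where the barrier is built in essentially the way you do it, namely as a profile of the distance function, $V=\Phi(d)$, with $\Phi$ obtained by integrating an ODE driven by the radial majorant $\overline{\rho}$, so that the weighted integrability {\bf H4 ii} is precisely what gives $\Phi(0^+)=0$; your Tonelli reduction of the double integral to $\int_0^s\eta\,\overline{\rho}(\eta)\,d\eta+s\int_s^{\varepsilon}\overline{\rho}(\eta)\,d\eta$ is exactly that mechanism. One caveat worth recording: your $V$ is $C^2$ in $\mathcal{S}^{\varepsilon}$ (indeed up to $\mathcal{A}^{\varepsilon}$) but in general only continuous, with value $0$, on $\mathcal{S}$, since $\Phi'$ and $\Phi''$ blow up whenever $\overline{\rho}$ does (e.g.\ $\overline{\rho}(\eta)=\eta^{-\alpha}$ with $\alpha\in(0,2)$, as in \eqref{choicerho}); the literal claim $V\in C^2(\overline{\mathcal{S}^{\varepsilon}})$ cannot be attained in the singular case, because $\Delta V\leq-\rho$ with $\rho$ unbounded near $\mathcal{S}$ already forces $\Delta V$ to be unbounded, so the lemma must be read with this weaker regularity, which is all that is ever used in the paper.
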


In this section we use the fact that for any $\varphi\in
C(\mathcal S\times [0,T])$, there exists
\begin{equation}\label{e610}
\tilde \varphi \in C(\overline Q_T)\quad \textrm{such that}\;\,
\tilde \varphi=\varphi \quad \textrm{in}\;\; \mathcal S \times
[0,T]\,.
\end{equation}
We shall write $\tilde \varphi \equiv \varphi$.

\begin{proof}[\bf Proof of Theorem \ref{pointcond}]
The proof is divided into two main parts. At first,
we consider that case of a density $\rho$ satisfying hypothesis
{\bf H4} and
\begin{equation*}
\inf_{\Omega} \rho >0.
\end{equation*}

\noindent Let $\eta_0>0$. For any $0<\eta<\eta_0$, we define
$u^\eta_\e \in C(\overline{\Omega^\varepsilon} \times [0,T])$ as
the unique solution (see \cite{LSU}) to
\begin{equation}\label{e16bis}
\left\{
\begin{array}{ll}
\,   \rho\, \d_t u = \Delta\big[G(u)\big] &\textrm{in}\,\,\Omega^\varepsilon\times (0, T)\,, \\&\\
\, u\,=\, \varphi +\eta &\textrm{on}\,\, \mathcal
A^\varepsilon\times (0,T)\,,
\\ & \\ \, u \, = u_{0,\e} +\eta &\textrm{in}\,\, \Omega^\varepsilon\times \{0\}\,,
\end{array}
\right.
\end{equation}
where
\[u_{0,\e}(x):= \zeta_\e \, u_0(x) + (1-\zeta_\e) \, \varphi(x,0) \quad \textrm{in} \;\; \overline \Omega^\varepsilon\,,\]
and
$\{\zeta_\e\}\subset C^{\infty}_c(\Omega^\varepsilon)$ is a
sequence of functions such that, for any $\e>0$, $0\le
\zeta_\e\le 1$ and $ \zeta_\e\equiv 1$ in $\Omega^{2\e}$. By the comparison principle, there holds
\begin{equation}\label{e17bis}
|u^\eta_\e|\le K:=\max\{\|u_0\|_\infty, \|\varphi\|_\infty
\}+\eta_0\quad \textrm{in}\;\;\Omega^\varepsilon\times (0,T)\,.
\end{equation}

Moreover, by usual compactness arguments (see, e.g., \cite{LSU}),
there exists a subsequence
$\{u^\eta_{\e_k}\}\subseteq\{u^\eta_\e\}$ which converges, as
$\e_k\to 0$, locally uniformly in $\Omega\times [0,T]$, to a
solution $u^\eta$ to the following problem
\begin{equation}\label{e18bis}
\left\{
\begin{array}{ll}
\,   \rho\, \d_t u = \Delta \big[G(u)\big] &\textrm{in}\,\,
\Omega\times (0,T]\,,
\\& \\
\textrm{ }u \, = u_0+\eta& \textrm{in\ \ } \Omega\times \{0\} \,.
\end{array}
\right.
\end{equation}
We want to prove that, for each $\tau \in (0,T)$,
$$\lim_{\newatop{x\to x_0}{t\to t_0}} u^\eta(x,t)=\varphi(x_0,t_0),$$
 uniformly with respect to $t_0 \in (\tau,T]$ , $x_0 \in \mathcal S$ and $\eta \in (0,\eta_0)$.

\vskip0.2cm

Take any $\tau\in(0, T/2).$ Let $(x_0,t_0) \in \mathcal S \times
[2\tau,T]$. Set $N^\e_\delta(x_0):=B_{\delta}(x_0) \cap \Omega^\e$
for any $\delta
>0$ and $\e>0$ small enough. From the continuity of the function
$\varphi$ and since $G \in C^1(\R)$ is increasing, there follows
that,  for any $\sigma>0$, there exists $\delta(\sigma)>0$,
independent of $(x_0, t_0),$ such that
\begin{equation}\label{stima1}
G^{-1}\big[G(\varphi(x_0,t_0)+\eta)-\sigma\big]\, \le\, \varphi(x,t)+\eta \,\le
G^{-1}\big[G(\varphi(x_0,t_0)+\eta)+\sigma\big]\,,
\end{equation}
for all $(x,t)  \in \overline N_\delta(x_0) \times (\underline
t_\de,\overline t_\de)$, where
$$\underline t_\de:= t_0-\de\,, \quad {\rm and} \quad \overline t_\de:=
\min\{ t_0+\de, T\},$$ and
\[N_\delta(x_0):= B_\delta(x_0)\cap \Omega\,.\]
Clearly, $\underline t_\delta>\tau\,.$ Now, for any $(x,t)  \in
\overline N_\delta(x_0) \times (\underline t_\de,\overline
t_\de)$, we define
\begin{equation}\label{e600}
 \underline w(x,t) :=G^{-1}\big[-\underline{M} V(x) -\s
+G(\varphi(x_0,t_0)+\eta) -\underline{\l} (t-t_0)^2-\underline \beta |x-x_0|^2\big], \,
\end{equation}
with $V(x)$ as in Lemma \ref{lemma0} and $\underline M$, $\underline \lambda$ and $\underline \beta$ positive constants to be fixed conveniently in the sequel.

First of all we want to prove that
\begin{equation}\label{e305}
\rho \d_t \underline w \leq \Delta G(\underline w)\quad
\textrm{in}\;\; N_\de^\e(x_0) \times (\underline t_\de,\overline
t_\de).
\end{equation} To his purpose, we note that
\begin{equation*}
\rho \d_t \underline w \leq \rho \frac{2\underline \lambda \de}{\alpha_0}, \qquad {\rm and } \qquad
\Delta G(\underline w) \geq \underline M \rho -2 \underline \beta N.
\end{equation*}
Hence, the function $\underline w$ solves \eqref{e305}, if
\begin{equation}\label{condM}
\underline M \geq \frac{2 \underline \beta N}{\inf_{\Omega} \rho} + \frac{2\underline \lambda \de}{\alpha_0}.
\end{equation}
Going further, for any $(x,t) \in  [B_{\delta}(x_0) \cap \mathcal A^\e] \times (\underline t_\de,\overline t_\de)$, we have
\begin{equation}\label{stima3}
\underline w(x,t) \leq G^{-1}[G(\varphi(x_0,t_0)+\eta)-\sigma].
\end{equation}
Moreover, for $(x,t) \in  [\d B_{\delta}(x_0) \cap \Omega^\e] \times (\underline t_\de,\overline t_\de)$, there holds
\begin{equation}\label{stima4}
\underline w(x,t) \leq -K,
\end{equation}
provided
\begin{equation*}
\underline\beta \geq
\frac{G(||\varphi||_{L^\infty}+\eta_0)-G(-K)}{\delta^2}.
\end{equation*}
Finally, for all $(x,t) \in N_\de^\e(x_0) \times \{ \underline
t_\de\}$, there holds
\begin{equation}\label{stima5}
\underline w(x,t) \leq G^{-1}[G(\varphi(x_0,t_0)+\eta)-\underline \lambda \de^2] \leq -K,
\end{equation}
assuming
\begin{equation*}
\underline \lambda \geq \frac{G(||\varphi||_{L^\infty}+\eta_0)-G(-K)}{\de^2}.
\end{equation*}
From \eqref{stima3}, \eqref{stima4} and \eqref{stima5} we obtain
that $\underline w$ is a subsolution to the following problem
\begin{equation}\label{subsoluz2bis}
\left\{
\begin{array}{ll}
\,   \rho\, \d_t u = \Delta\big[G(u)\big] &\textrm{in}\,\,\ N^{\varepsilon}_{\delta}(x_0)\times (\underline t_\de, \overline t_\de)\,, \\&\\
\, u\,=\, -K &\textrm{in}\,\, [\d B_{\delta}(x_0) \cap
\Omega^\e]\times (\underline t_\de, \overline t_\de)\,,
\\ & \\ \, u \, = G^{-1}[G(\varphi+\eta)-\sigma] &\textrm{in}\,\, [ B_{\delta}(x_0) \cap\mathcal A^\e]\times(\underline t_\de, \overline t_\de)\,,
\\ & \\ \, u \, = -K&\textrm{in}\,\, N^{\varepsilon}_\de(x_0)\times \{\underline t_\de\}\,.
\end{array}
\right.
\end{equation}
\vskip0.3cm \noindent Recalling the definition of $u^\eta_\e$
given in \eqref{e16bis}, and by using \eqref{e17bis}, it follows
that $u^\eta$ is a supersolution to problem \eqref{subsoluz2bis}.
Note that sub-- and supersolutions to problem \eqref{subsoluz2bis}
are meant similarly to Definition \ref{soluzdeboleeps},
considering that $N^\e_\delta(x_0)$ is piece-wise smooth; the same
holds for problems of the same form we mention in the sequel.

\smallskip

By proceeding with the same methods, for all $(x,t)  \in \overline
N_\delta(x_0) \times (\underline t_\de,\overline t_\de)$ we define
\begin{equation}\label{e601}
 \overline w(x,t) :=G^{-1}\big[\overline{M} V(x) +\s
+G(\varphi(x_0,t_0)+\eta) +\overline{\l} (t-t_0)^2+\overline \beta |x-x_0|^2\big], \,
\end{equation}
proving that, with an appropriate choice for the coefficients
$\overline M, \overline \lambda$ and $\overline \beta$, $\overline
w$ is a supersolution to problem
\begin{equation}\label{subsoluz3bis}
\left\{
\begin{array}{ll}
\,   \rho\, \d_t u = \Delta\big[G(u)\big] &\textrm{in}\,\,\ N^{\varepsilon}_{\delta}(x_0)\times (\underline t_\de, \overline t_\de)\,, \\&\\
\, u\,=\, K &\textrm{in}\,\, [\d B_{\delta}(x_0) \cap
\Omega^\e]\times (\underline t_\de, \overline t_\de)\,,
\\ & \\ \, u \, = G^{-1}[G(\varphi+\eta)+\sigma] &\textrm{in}\,\, [ B_{\delta}(x_0) \cap\d \Omega^\e]\times(\underline t_\de, \overline t_\de)\,,
\\ & \\ \, u \, = K&\textrm{in}\,\, N^{\varepsilon}_\de(x_0)\times \{\underline t_\de\}\,.
\end{array}
\right.
\end{equation}
Precisely, we require $\overline M$ to be such that
\begin{equation*}
\overline M \geq \frac{2 \overline \beta N}{\inf_{\Omega} \rho} + \frac{2\overline \lambda \de}{\alpha_0},
\end{equation*}
while $\overline \beta$ and $\overline \lambda$ are chosen so that
\begin{equation*}
\overline\beta \geq \frac{G(K)-G(||\varphi||_{L^\infty}+\eta_0)}{\delta^2}, \qquad \underline \lambda \geq \frac{G(K)-G(||\varphi||_{L^\infty}+\eta_0)}{\de^2}.
\end{equation*}
On the other hand, $u^\eta$ is a subsolution to problem
\eqref{subsoluz3bis}. Hence, by the comparison principle, and by
letting $\e_k\to 0$, we get
\begin{equation}\label{e36}
\underline w \le u^\eta \le \overline w\quad \textrm{in}\;\;
 N_\de(x_0) \times (\underline t_\delta,\overline t_\de)\,.
\end{equation}
Take any $\tau\in (0,T/2)$ and $(x_0,t_0) \in\mathcal S \times
[2\tau,T]$. Due to \eqref{e36}, recalling the definition of
$\underline w$ and $\overline w$ and by letting $x\to x_0, t\to
t_0$, one has
\begin{equation*}
G^{-1}\big[G(\varphi(x_0,t_0)+\eta)-2\s \big]\le u^\eta(x_0,t_0)
\le G^{-1}\big[G(\varphi(x_0,t_0)+\eta)+2\s\big] \,.
\end{equation*}
Letting $\s \to 0^+$, we end up with
\begin{equation*}
\lim_{\newatop{x\to x_0}{t\to t_0}} u^\eta(x,t)=\varphi(x_0,t_0),
\end{equation*}
uniformly with respect to $t_0\in (2\tau,T)$, $x_0 \in \mathcal S$
and $\eta \in (0,\eta_0)$, for each $\tau \in (0,T/2)$. Moreover,
by usual compactness arguments, there exists a subsequence $\{
u^{\eta_k} \} \subset \{ u^\eta \} $ which converges, as $\eta_k
\to 0$, to a solution $u$ to \eqref{mainproblem}, locally
uniformly in $\Omega \times [0,T]$. Hence, by using \eqref{e36},
we have, in the limit $\sigma \to 0^+$ and $\eta \to 0^+$,
\begin{equation*}
\lim_{\newatop{x\to x_0}{t\to t_0}}u(x,t)=\varphi(x_0,t_0),
\end{equation*}
uniformly with respect to $t_0\in (2\tau,T)$ and $x_0 \in \mathcal
S$, for each $\tau \in (0,T/2)$. \vskip0.2cm It remains to show that $u$ is the maximal solution. To
this end, let $v$ be any solution to problem \eqref{mainproblem}
satisfying \eqref{primoris}. From \eqref{e36} it follows that for
any $\a \in (0, \eta_0/4)$ and for any $\tau \in (0,T)$, there
exists $\tilde \e>0$ such that for any $0<\e<\tilde \e$ and
$\eta\in (0,\eta_0)$
\begin{equation}\label{e30c}
v(x,t)\leq \varphi(x,t)+\a \leq u^\eta(x,t)\quad \textrm{for
all}\;\; (x,t)\in \mathcal A^\e\times(\tau,T]\,.
\end{equation}
Moreover
\begin{equation}\label{e31c}
v(x,0)= u_0(x)<u_0(x)+\eta=u^\eta(x,0)\quad \textrm{for all}\;\;
x\in \Omega.
\end{equation}
Since $v(x,t)$ and $u^\eta(x,t)$ are solutions to the same
equations in $\Omega\times (0,T]$, in view of \eqref{e30c},
\eqref{e31c} and Lemma \ref{lemma-1} there holds
\[ v(x,t)\leq u^\eta(x,t)\quad \textrm{for all}\;\; (x,t)\in Q_T\,.\]
Passing to the limit $\eta \to 0^+$ we obtain
$$v\leq u\quad \textrm{in}\;\;Q_T,$$
and the proof is complete, in this case.

\vskip0.5cm

In the second part of the proof, we consider a density $\rho$ such
that $\rho \in L^{\infty}(\Omega)$. Now, we need to slightly
modify the arguments used above.
 Since $\mathcal S \in C^1$, by \cite{GT} the uniform exterior sphere condition is satisfied, i.e. there exists $R>0$ such that for any $x_0 \in \mathcal S$ we can find
 $x_1 \in \R^N\setminus \bar\Omega$ such that
$B(x_1, R)\subset \R^N\setminus \bar\Omega$ and
$\overline{B(x_1,R)} \cap \mathcal S = \{ x_0\}$. Thus, by
standard arguments (see K. Miller \cite{Mi}), it is proven that
the following function
\begin{equation}\label{defhmiller}
h(x):= C[e^{-a \, R^2}-e^{-a \, |x-x_0|^2}]
\end{equation}
satisfies
\begin{itemize}
\item $\Delta h \leq -1$  in $B_R(x_0)$; \vskip0.2cm \item $h>0$
for all  $x \in \big[\bar B_{R}(x_0) \cap \bar
\Omega\big]\setminus\{x_0\}$; \vskip0.2cm \item $h(x_0)=0$,
\end{itemize}
for a suitable choice of the constants $C>0$ and $a>0$,
independent of $x_0 \in \mathcal S$.

\vskip0.3cm

The function $h(x)$  can be used  in order to built suitable
barrier functions  $\underline w(x,t)$ and $\overline w(x,t)$. To
this end, for $(x,t)\in \overline N_\delta(x_0) \times (\underline
t_\de,\overline t_\de)$, we define
\begin{equation}\label{e602}
 \underline w(x,t) :=G^{-1}\big[-\underline{M} h(x) -\s
+G(\varphi(x_0,t_0)+\eta) -\underline{\l} (t-t_0)^2\, \big],
\end{equation}
being $h(x)$ as in \eqref{defhmiller}.

\vskip0.2cm First of all, because of the properties of $h(x)$,
there holds $\rho \d_t \underline w \leq \Delta G(\underline w)$,
if
\begin{equation*}
\underline M \geq \frac{2 \,\rho(x) \,\underline\lambda
\,\delta}{\alpha_0},
\end{equation*}
Hence, we require that
\begin{equation*}
\underline M \geq \frac{2 \,  \lambda \, \delta}{\alpha_0} \, \| \rho \|_{L^\infty}.
\end{equation*}
Next, let $(x,t) \in  [B_{\delta}(x_0) \cap \mathcal A^\e] \times (\underline t_\de,\overline t_\de)$; we have
\begin{equation}\label{stima3bis}
\underline w \leq G^{-1}[G(\varphi(x_0,t_0)+\eta)-\sigma].
\end{equation}
Moreover, for $(x,t) \in  [\d B_{\delta}(x_0) \cap \Omega^\e] \times (\underline t_\de,\overline t_\de)$ we have
\begin{equation}\label{stima4bis}
\underline w(x,t) \leq -K,
\end{equation}
provided
\begin{equation*}
\underline M \geq
\frac{G(||\varphi||_{L^\infty}+\eta_0)-G(-K)}{\inf_{\d
B_{\delta}(x_0) \cap \Omega} h} \, .
\end{equation*}
Finally, for $(x,t) \in N_\de^\e(x_0) \times \{\underline t_\de\}$
\begin{equation}\label{stima5bis}
\underline w(x,t) \leq G^{-1}[G(\varphi(x_0,t_0)+\eta)-\underline \lambda \de^2] \leq -K
\end{equation}
imposing
\begin{equation*}
\underline \lambda \geq \frac{G(||\varphi||_{L^\infty}+\eta_0)-G(-K)}{\de^2}.
\end{equation*}

\vskip0.3cm
From \eqref{stima3bis}, \eqref{stima4bis} and \eqref{stima5bis} we can state  that $\underline w$ is a subsolution to the following problem
\begin{equation}\label{subsoluz2bisbis}
\left\{
\begin{array}{ll}
\,   \rho\, \d_t u = \Delta\big[G(u)\big] &\textrm{in}\,\,\ N^{\varepsilon,\e_0}\times (\underline t_\de, \overline t_\de)\,, \\&\\
\, u\,=\, -K &\textrm{on}\,\, [\d B_{\delta}(x_0) \cap \Omega^\e]\times (\underline t_\de, \overline t_\de)\,,
\\ & \\ \, u \, = G^{-1}[G(\varphi+\eta)-\sigma] &\textrm{in}\,\, [ B_{\delta}(x_0) \cap\d \Omega^\e]\times(\underline t_\de, \overline t_\de)\,,
\\ & \\ \, u \, = -K&\textrm{in}\,\, N^{\varepsilon}_\de(x_0)\times \{\underline t_\de\}\,,
\end{array}
\right.
\end{equation}
while $u^\eta$ is a supersolution to the same problem. By
proceeding with the same methods, for all $(x,t) \in \overline
N_\delta(x_0) \times (\underline t_\de,\overline t_\de)$ we define
\begin{equation}\label{e603}
 \overline w(x,t) :=G^{-1}\big[\overline{M} \, h(x) +\s
+G(\varphi(x_0,t_0)+\eta) +\overline{\l} (t-t_0)^2\big], \,
\end{equation}
proving that, with the appropriate choices for the coefficients
$\overline M, \overline \lambda$ and $\overline \beta$, $\overline
w$ is a super-solution to  problem
\begin{equation}\label{subsoluz2bisbisa}
\left\{
\begin{array}{ll}
\,   \rho\, \d_t u = \Delta\big[G(u)\big] &\textrm{in}\,\,\ N^{\varepsilon,\e_0}\times (\underline t_\de, \overline t_\de)\,, \\&\\
\, u\,=\, K &\textrm{on}\,\, [\d B_{\delta}(x_0) \cap
\Omega^\e]\times (\underline t_\de, \overline t_\de)\,,
\\ & \\ \, u \, = G^{-1}[G(\varphi+\eta)+\sigma] &\textrm{in}\,\, [ B_{\delta}(x_0) \cap\d \Omega^\e]\times(\underline t_\de, \overline t_\de)\,,
\\ & \\ \, u \, = K&\textrm{in}\,\, N^{\varepsilon}_\de(x_0)\times \{\underline t_\de\}\,,
\end{array}
\right.
\end{equation}
while $u^\eta$ is a subsolution to the same problem. Hence, by the
comparison principle, and by letting $\e_k\to 0$, we get
\begin{equation}\label{e36bis}
\underline w \le u^\eta \le \overline w\quad \textrm{in}\;\;
 N_\de(x_0) \times (\underline t_\delta,\overline t_\de)\,.
\end{equation}
Take any $\tau\in (0, T/2)\,.$ Let $(x_0,t_0) \in \mathcal S
\times [2\tau,T]$. In view of \eqref{e36bis}, recalling the
definition of $\underline w$ and $\overline w$ and by letting
$x\to x_0$ and choosing $t=t_0$, one has
\begin{equation*}
G^{-1}\big[G(\varphi(x_0,t_0)+\eta)-2\s \big]\le u^\eta(x,t_0) \le
G^{-1}\big[G(\varphi(x_0,t_0)+\eta)+2\s\big] \,.
\end{equation*}
So, the thesis follows for $\s \to 0^+$ as in the previous case,
as well as the maximality of $u$.

\end{proof}

\begin{proof}[\bf Proof of Theorem \ref{degpuntuale}]

As in the proof of Theorem \ref{pointcond}, we consider at first
the case of a density $\rho$ satisfying hypothesis {\bf H4} and
$\inf_{\Omega} \rho >0$.

\smallskip
We define $u^\eta_\e \in C(\overline{\Omega^\varepsilon} \times
[0,T])$ as the unique solution to \eqref{e16bis}. Take any $x_0\in
\mathcal S$. Observe that from \eqref{e301} we can infer that for
any $\sigma>0$ there exists $\delta=\delta(\sigma)>0$, independent
of $x_0$, such that
\begin{equation}\label{e306}
G^{-1}\big[G(\varphi(x_0)+\eta) -\sigma\big]\leq u_0(x)+\eta\leq
G^{-1}\big[G(\varphi(x_0)+\eta) +\sigma\big]\quad \textrm{for
all}\;\, x\in N_\delta(x_0)\,.
\end{equation}

\smallskip

\noindent For all $x \in \overline N_\delta(x_0)$, we  define
\begin{equation}\label{ea3}
\underline w (x):=G^{-1}\big[-\underline{M} \, V(x) - \sigma +
G(\varphi(x_0)+\eta) -\underline \beta|x-x_0|^2\big]\,,
\end{equation}
where $V$ is defined in Lemma \ref{lemma0}, and $\underline M$ and $\underline \beta$ are positive constants to be chosen. There holds
\begin{equation*}
\Delta G(\underline w) \geq \underline M \rho-2 \underline \beta N \geq0,
\end{equation*}
provided
\begin{equation}\label{M2}
\underline M \geq \frac{2 \underline \beta N}{\inf_{\Omega} \rho}.
\end{equation}
Going further, for all $(x,t)  \in [B_{\delta}(x_0) \cap \mathcal
A^\e] \times (0,T)$, there holds
\begin{equation}\label{e59}
\underline w \leq \varphi(x_0)+\eta,
\end{equation}
while, for all $(x,t) \in  [\d B_{\delta}(x_0) \cap \Omega^\e]
\times(0,T)$
\begin{equation*}
\underline w \leq -K,
\end{equation*}
provided
\begin{equation*}
\underline \beta \geq \frac{G(|\varphi(x_0)|)-G(-K)}{\delta^2}.
\end{equation*}
Moreover, from \eqref{e306} it follows that
\begin{equation}\label{e307}
\underline w(x)\leq u_0(x)+\eta \quad \textrm{for all}\;\; x\in
N^\e_\delta(x_0)\,.
\end{equation}
Thus $\underline w$ is a subsolution, while $u^\eta$ is a
supersolution to problem
\begin{equation}\label{e308}
\left\{
\begin{array}{ll}
\,   \rho\, \d_t u = \Delta\big[G(u)\big] &\textrm{in}\,\,\ N^{\varepsilon}_\delta(x_0)\times (0,T)\,, \\&\\
\, u\,=\, -K &\textrm{on}\,\, [\d B_{\delta}(x_0) \cap
\Omega^\e]\times (0,T)\,,
\\ & \\ \, u \, = G^{-1}[G(\varphi+\eta)-\sigma] &\textrm{in}\,\, [ B_{\delta}(x_0) \cap\d \Omega^\e]\times(0, T)\,,
\\ & \\ \, u \, = u_0+\eta   &\textrm{in}\,\, N^{\varepsilon}_\de(x_0)\times \{0\}\,,
\end{array}
\right.
\end{equation}
By the comparison principle, there holds
\begin{equation}\label{e64bis}
\underline w \leq u^\eta_\e\quad \textrm{in}\;\;
N^\e_{\delta}(x_0)\times (0, T)\,.
\end{equation}
Analogously, we have
\begin{equation}\label{e66bis}
u^\eta_\e \leq \overline{w} \quad \textrm{in}\;\;
N^\e_{\delta}(x_0)\times (0, T)\,,
\end{equation}
where
\begin{equation}\label{e604}
\overline w(x):=G^{-1}\big[\overline{M} V(x) + \s+
G(\varphi(x_0)+\eta) \big]\,,
\end{equation}
with $\overline M>0$ conveniently chosen.

\smallskip

From \eqref{e64bis} and \eqref{e66bis} with $\e=\e_k \to 0$, we
obtain
\begin{equation*}
\underline w \le u^\eta \le \overline w\quad \textrm{in}\;\;
 N_{\delta}(x_0)\times
(0, T)\,,
\end{equation*}
where $u^\eta$ is a solution to problem \eqref{e18bis}. Hence the
thesis follows by letting $x\to x_0$ and $\sigma \to 0^+$, as in
the proof of Theorem \ref{pointcond}.

By slightly modifying the previous arguments, it is possible to
prove Theorem \ref{degpuntuale} also in the case of a density
$\rho$ satisfying $\rho \in L^\infty(\Omega)$. Indeed, we
construct the barrier functions $\underline w(x)$ and $\overline
w(x)$ as
\begin{equation}\label{e606}
\underline w(x):=G^{-1}\big[-\underline{M} h(x) -\s
+G(\varphi(x_0)+\eta) -\underline{\beta} |x-x_0|^2\, \big],
\end{equation}
\begin{equation}\label{e607}
\overline w(x):=G^{-1}\big[\overline{M} \, h(x) +\s
+G(\varphi(x_0)+\eta) +\overline{\beta} |x-x_0|^2\big],
\end{equation}
being $h(x)$ as in \eqref{defhmiller}. The thesis follows as in
the second part of the proof of Theorem \ref{pointcond}, by making
use of the properties of $h(x)$ and by suitable choices of the
constants $\underline M, \underline \b, \overline M, \overline
\b$.
\end{proof}

\begin{proof}[\bf Proof of Theorem \ref{deg2puntuale}]

Let $$\alpha_2:=\min\Big\{\min_{\bar \Omega \times [0,T]}
\varphi,\,  \alpha_1\Big\}\,,$$ with $\alpha_1>0$ as in
\eqref{ipophipositiva}. Since $\varphi\in C(\mathcal
S\times[0,T])$ and $\varphi>0$ in $\mathcal S\times [0,T]$, we can
select $\tilde\varphi\equiv \varphi$ as in \eqref{e610}, such that
$\tilde\varphi>0$ in $\bar Q_T$\,. So, $\alpha_2>0.$ Take
$\underline u_0\in C(\bar \Omega)$ such that
\begin{equation}\label{eq1}
\underline u_0\leq u_0\quad \textrm{in}\;\; \Omega\,,\;\,
\lim_{x\to x_0} \underline u_0(x)=\frac{\alpha_2}{2}\,.
\end{equation}
By Theorem \ref{degpuntuale}, there exists a solution $\underline
u(x,t)$ to the following problem
\begin{equation}\label{eq2bis}
\left\{\begin{aligned}
\rho \d_t u &= \Delta [G(u)] \quad & {\rm in } \ & \Omega \times (0,T], \\
u&=\underline u_0 \quad &{\rm in}  \ & \Omega \times \{ 0\},
\end{aligned}\right.
\end{equation}
such that
\begin{equation}\label{eq3bis}
\lim_{x\to x_0} \underline u(x,t)=\frac{\alpha_2}2 \quad
\textrm{uniformly for}\;\;x_0\in\mathcal S, t\in [0,T]\,.
\end{equation}
We construct the approximating sequence $\{u^\eta_\e\}$ as in the
proof of Theorem \ref{pointcond}. Due to \eqref{eq2bis} and
\eqref{eq3bis}, by the comparison principle, we have that for some
$\e_0>0$, for every $0<\e<\e_0$
\begin{equation}\label{eq4bis}
\underline u(x,t)\leq u^\eta_\e(x,t) \quad \textrm{for all}\; x\in
\Omega^\e\,,\ t\in (0,T]\,.
\end{equation}
Then there exists a subsequence $\{u^\eta_{\e_k}\}\subset
\{u^\eta_\e\}$ which converges, as $\e_k\to0$, to a solution
$u^\eta$ to \eqref{e18bis}. From \eqref{eq4bis} it follows that
\[
u^\eta(x,t)\geq \underline u(x,t)\quad \textrm{for all}\;\; x\in
\Omega\,, \ t\in (0,T]\,.
\]
Therefore, for some $0<\e_1<\e_0$, for all $0<\eta<\eta_0$ there
holds
\begin{equation}\label{eq5bis}
u^\eta(x,t)\geq \frac{\alpha_2}4\quad \textrm{for all}\; x\in
\mathcal S^{\e_1}, \ t\in (0,T]\,.
\end{equation}
Hence, in $\mathcal S^{\e_1}\times (0,T]$ the equation does not
degenerate, i.e., for some $\a_0>0$,
\[G'(u)\geq \a_0 \quad \textrm{in}\;\;\mathcal S^{\e_1}\times (0,T]\,. \]

Select a function $G_1$ such that hypothesis {\bf H2} is
satisfied; moreover, $G_1(u)=G(u)$ for $u\geq \frac{\a_2}4$ and
$G'_1(u)\geq \frac{\a_0}{2}>0$ for all $u\in \R.$ From
\eqref{eq5bis}, $u^\eta(x,t)$ is a solution to the non-degenerate
equation
\[\rho \d_t u =\big[G_1(u)\big]\quad \textrm{in}\;\;\mathcal S^{\e_1}\times (0,T]\,. \]
Thus we get the conclusion as in the proof of Theorem
\ref{pointcond}\,.

\end{proof}

\section{uniqueness results: proofs}

The proof of  Theorem \ref{teounicita} makes use of the following lemma.

\begin{lemma}\label{lem3}
Let $\e_0>0$ and $ F\in C^{\infty}(\Omega)$ such that $F\geq 0,\;
{\rm supp} \ F\subset \Omega^{\e_0}$. Then, for any $0<\e<\e_0$,
there exists a unique classical solution $\psi^\e$ to the problem
\begin{equation}\label{e60}
\left\{
\begin{array}{ll}
\,   \Delta \psi^\e = - F
&\textrm{in}\,\, \Omega^\e
\\& \\
\textrm{ }\psi^\e \, = 0& \textrm{on\, } \mathcal A^\e\,.
\end{array}
\right.
\end{equation}
Moreover, for any $0<\e<\e_0$ there holds:
\begin{equation}\label{e61}
\psi^\e >  0\quad \textrm{in}\;\; \Omega^\e\,;
\end{equation}
\begin{equation}\label{e62}
\langle\nabla \psi^\e(x), \nu^\e(x)\rangle < 0 \quad \textrm{for all}\;\; x\in \mathcal A^\e\,;
\end{equation}
\begin{equation}\label{e63}
\int_{\mathcal A^\e} \big| \langle \nabla \psi^\e, \nu^\e\rangle \big|d S \leq \bar C\,,
\end{equation}
for some constant $\bar C>0$ independent of $\e$; here $\nu^\e$
denotes the outer unit normal vector to $\d \Omega^\e$.
\end{lemma}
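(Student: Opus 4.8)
The plan is to read \eqref{e60} as the classical homogeneous Dirichlet problem $-\Delta\psi^\e=F$ on the bounded domain $\Omega^\e$, whose boundary is exactly $\mathcal A^\e$, and then to obtain \eqref{e61}, \eqref{e62} and \eqref{e63} in turn from the maximum principle, Hopf's boundary lemma and the divergence theorem. The whole argument rests on the geometry supplied by {\bf H0}: after shrinking $\e_0$ if necessary so that $\mathcal S^{\e_0}$ is a genuine tubular collar in which the distance function $d$ is of class $C^3$, each level set $\mathcal A^\e=\{d=\e\}$ is a compact $C^3$ hypersurface and $\Omega^\e$ is a bounded domain with $C^3$ boundary satisfying a uniform interior ball condition.

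First I would settle existence, uniqueness and regularity. Since $F\in C^\infty$ and $\partial\Omega^\e=\mathcal A^\e$ is smooth, standard elliptic theory for the Dirichlet problem (see \cite{GT}) produces a unique classical solution $\psi^\e\in C^2(\Omega^\e)\cap C^1(\overline{\Omega^\e})$; uniqueness also follows at once from the maximum principle, because the difference of two solutions is harmonic in $\Omega^\e$ and vanishes on $\mathcal A^\e$.

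Next, \eqref{e61} and \eqref{e62}. As $\Delta\psi^\e=-F\le 0$, the function $\psi^\e$ is superharmonic, so it attains its minimum on $\mathcal A^\e$, where it equals $0$; hence $\psi^\e\ge 0$ in $\Omega^\e$, and, since $F\ge 0$ is not identically zero (the case $F\equiv 0$ being trivial), the strong maximum principle rules out an interior zero and yields \eqref{e61}. For \eqref{e62} I would apply Hopf's lemma at each $x\in\mathcal A^\e$: there $\psi^\e$ attains its minimum value $0$, is strictly positive in $\Omega^\e$, and $\Omega^\e$ has an interior ball at $x$, so the outward normal derivative $\langle\nabla\psi^\e,\nu^\e\rangle$ is strictly negative.

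The decisive step is the $\e$-uniform bound \eqref{e63}. Here the key observation is that the total outward flux is controlled exactly by the mass of $F$: integrating the equation over $\Omega^\e$ and using the divergence theorem gives
\[
\int_{\mathcal A^\e}\langle\nabla\psi^\e,\nu^\e\rangle\,dS=\int_{\Omega^\e}\Delta\psi^\e\,dx=-\int_{\Omega^\e}F\,dx=-\int_\Omega F\,dx,
\]
where the last equality uses $\supp F\subset\Omega^{\e_0}\subset\Omega^\e$ for every $\e<\e_0$. By the sign information just established in \eqref{e62}, the integrand on the left is negative, so $|\langle\nabla\psi^\e,\nu^\e\rangle|=-\langle\nabla\psi^\e,\nu^\e\rangle$ on $\mathcal A^\e$, and therefore
\[
\int_{\mathcal A^\e}\big|\langle\nabla\psi^\e,\nu^\e\rangle\big|\,dS=\int_\Omega F\,dx=:\bar C,
\]
a constant manifestly independent of $\e$. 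I expect the only delicate points to be geometric rather than analytic --- namely choosing $\e_0$ small enough that $\mathcal A^\e$ is smooth and admits a uniform interior ball (needed both for Hopf's lemma and for applying the divergence theorem), which {\bf H0} guarantees. The constancy of the right-hand side above is exactly what makes \eqref{e63} uniform in $\e$, and it is this conservation-of-flux identity, combined with the fixed sign of the normal derivative, that carries the proof.
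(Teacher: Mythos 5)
Your proposal is correct and follows essentially the same route as the paper: strong maximum principle (with Hopf's lemma) for \eqref{e61}--\eqref{e62}, and the divergence theorem combined with the fact that $\int_{\Omega^\e}F\,dx=\int_{\Omega^{\e_0}}F\,dx$ is independent of $\e$, together with the fixed sign of $\langle\nabla\psi^\e,\nu^\e\rangle$, to get the uniform bound \eqref{e63}. The only cosmetic remark is that the strict inequalities \eqref{e61}--\eqref{e62} require $F\not\equiv 0$, an assumption the paper also makes implicitly (and imposes explicitly when applying the lemma).
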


\begin{proof}

For any $0<\e<\e_0$, the existence and the uniqueness of the
solution $\psi_\e$ to \eqref{e60} follow immediately. Moreover,
since $F\geq 0$, by the strong maximum principle we get
\eqref{e61} and \eqref{e62}. Observe that, since $ {\rm supp} \
F\subset \Omega^{\e_0}$, then for any $0<\e<\e_0$ we have
\begin{equation}\label{e64}
 \int_{\Omega^\e} F(x) \, dx = \int_{\Omega^{\e_0}} F(x) \, dx =: \bar C\,.
 \end{equation}
On the other hand, from \eqref{e60} by integrating by parts,
\begin{equation}\label{e65}
\int_{\Omega^\e} F(x) dx = - \int_{\Omega^\e}  \Delta \psi^\e dx =
-\int_{\mathcal A^\e} \langle \nabla \psi^\e, \nu^\e\rangle d S
\,.
\end{equation}
From \eqref{e64}, \eqref{e65}, and \eqref{e62} we get
\eqref{e63}\,.

\end{proof}

\smallskip
\bigskip

\begin{proof}[\bf Proof of Theorem \ref{teounicita}]

 In view of the
hypotheses we made, we can apply Theorem \ref{pointcond} to infer that there exists a maximal solution $\bar u$ to  \eqref{mainproblem}. Let $u$ be any solution to \eqref{mainproblem}, and let $F \in C^{\infty}_c(\Omega)$.

Without loss of
generality, we suppose ${\rm supp \ } F \subset  \Omega^{\e_0},$ for some $\e_0
>0$, $F \not \equiv 0$ and $ F\geq 0$. Since both $\bar u$ and  $u$ solves \eqref{mainproblem}, we apply the  equality
\eqref{e8a} with $\Omega=\Omega^\e$, $0<\e<2\e_0$ and $
\psi(x,t)=\psi^\e(x)$. We get

\begin{equation*}
\begin{aligned}
\int_0^T \int_{\Omega^\e} & [G(\bar u)- G(u)] \, F(x)\, dx \, dt  =\\
&=- \int_{\Omega^\e} [\bar u(x,T)-u(x,T)] \rho(x) \, \psi^\e(x) dx
-\int_0^T \int_{\mathcal A^\e} [G(\bar u)- G(u)] \langle \nabla
\psi^\e, \nu^\e \rangle dS \, dt
\end{aligned}
\end{equation*}
Since $F\geq 0,\; \psi^\e\geq 0$, $\bar u\geq u$ in $\Omega^\e$
and $\langle \nabla \psi, \nu^\e\rangle \leq 0$ on $\mathcal
A^\e$, the previous equality gives:
\begin{equation}\label{mandoepsazero}
\begin{aligned}
 \int_0^T& \int_{\Omega^\e}[G(\bar u) - G(u)] F(x)\, dx \, dt \leq  -\int_0^T\int_{\mathcal
A^\e}  [G(\bar u)-G(u)] \langle \nabla \psi, \nu^\e\rangle dS\, dt \, = \\
 & =  -\int_0^\tau\int_{\mathcal
A^\e}  [G(\bar u)-G(u)] \langle \nabla \psi, \nu^\e\rangle dS\, dt
\,   -\int_\tau^T\int_{\mathcal
A^\e}  [G(\bar u)-G(u)] \langle \nabla \psi, \nu^\e\rangle dS\, dt \, \\
\end{aligned}
\end{equation}
Going further, by \eqref{e63}, we get
\begin{equation}\label{e67}
\begin{aligned}
\int_\tau^T \int_{\Omega^\e}[G(\bar u) - G(u)] F(x) \, dx \, dt
&\leq \sup_{\mathcal A^\e\times (\tau,T)}[G(\bar u)- G(u)]
\int_{\mathcal A^\e}\big| \langle \nabla \psi, \nu^\e\rangle\big|
dS \, dt \\
&\leq \bar C  \sup_{\mathcal A^\e\times (\tau,T)}[G(\bar u)-
G(u)]\,.
 \end{aligned}
\end{equation}
Furthermore
\begin{equation}\label{e67prime}
\int_0^\tau \int_{\Omega^\e}[G(\bar u) - G(u)] F(x) \, dx \, dt
\leq \bar C \, \tau \, C,
\end{equation}
where the constant $C$ only depends on $\| u\|_{L^\infty}$ and $\| \bar u\|_{L^\infty}$.
Since any solution to \eqref{mainproblem} satisfies condition \eqref{primoris} uniformly for $t \in [\tau,T]$, for each $\tau \in (0,T)$, we get
\begin{equation}\label{e68}
\sup_{\mathcal A^\e\times (\tau,T)}[G(\bar u)- G(u)]\to 0 \quad
\textrm{as}\;\; \e\to 0\,.
\end{equation}
Hence, in view of \eqref{e67}, \eqref{e67prime} and \eqref{e68}, if we let $\e \to 0$ in \eqref{mandoepsazero} and then $\tau \to 0$, we obtain
\begin{equation}\label{e66}
\int_0^T\int_{\Omega} \big[ G(\bar u) - G(u) \big]\, F(x) \, dx \, dt\,= 0\,.
\end{equation}
In view of the hypothesis {\bf H2}, and because of the arbitrariness of $F$, \eqref{e66} implies
\begin{equation*}
 \bar u \,= u \quad \textrm{in}\;\; \Omega \times (0,T]\,,
\end{equation*}
and the proof is completed.

\end{proof}

As outlined in Remark \ref{osst1}, Theorem \ref{teounicita} holds
true either if we consider a non degenerate nonlinearity $G$
satisfying hypothesis {\bf H5} or if we suppose
\begin{equation*}
\varphi(x_0,t) \equiv \varphi(x_0), \quad {\rm for \ all } \ t \in[0,T].
\end{equation*}
Infact, in both cases, Theorem \ref{pointcond} and Theorem
\ref{deg2puntuale} assure the existence of the maximal solution
satisfying
 \eqref{primoris} and \eqref{secris} respectively. Hence, the uniqueness follows as in the proof of Theorem
 \ref{teounicita}.

\end{document}